\DeclareMathOperator{\mode}{mod}
\DeclareMathOperator{\ord}{ord}
\DeclareMathOperator{\dime}{dim}
\DeclareMathOperator{\pideg}{PI-deg}
\DeclareMathOperator{\gcdi}{gcd}
\DeclareMathOperator{\lcmu}{lcm}
\DeclareMathOperator{\ran}{rank}
\numberwithin{equation}{section}
\newtheorem{theo}{Theorem}[section]
\newtheorem{defi}[theo]{Definition}
\newtheorem{lemm}[theo]{Lemma}
\newtheorem{rema}[theo]{Remark}
\newtheorem{coro}[theo]{Corollary}
\newtheorem{prop}[theo]{Proposition}
\begin{document}

\setcounter{page}{1} 

\pagenumbering{arabic}
\title[Second Quantum Weyl Algebra]{Simple Modules over Second Quantum Weyl Algebra}
\author[Sanu Bera]{Sanu Bera}
\subjclass[2020]{16D60, 16D70, 16S36, 16R20, 16T20}
\keywords{Quantum Weyl Algebra, Simple Modules, Polynomial Identity Algebra, Skew Polynomial Ring}

\begin{abstract}
In this article, we study the multiparameter second quantum Weyl algebra at roots of unity. In this setting, the algebra is a polynomial identity (PI) algebra, and the dimension of its simple modules is bounded above by its PI degree. We explicitly determine the PI degree and provide a complete classification of simple modules. This classification offers a comprehensive solution to \cite[Problem 2]{cw} for the second quantum Weyl algebra.
\end{abstract}

\maketitle
\section{Introduction} Let $\mathbb{K}$ be a field and $\mathbb{K}^*$ denote the multiplicative group of nonzero elements of $\mathbb{K}$ and let $n$ be a positive integer. Let $\Lambda:=\left(\lambda_{ij}\right)$ be an $n \times n$ multiplicatively antisymmetric matrix over $\mathbb{K}$, that is, $ \lambda_{ii}=1$ and $\lambda_{ij}\lambda_{ji}=1$ for all $1 \leq i,j\leq n$ and let $\underline{q}:=(q_1,\cdots,q_n)$ be an $n$-tuple elements of $\mathbb{K}\setminus\{0,1\}$.
\begin{defi} Given such $\Lambda$ and $\underline{q}$, the multiparameter quantum Weyl algebra ${A_n^{\underline{q},\Lambda}}(\mathbb{K})$  is the algebra generated over the field $\mathbb{K}$ by the variables $x_1,y_1,\cdots ,x_n,y_n$ subject to the following relations: 
\begin{align*}
&x_ix_j=q_i\lambda_{ij}x_jx_i,\ \  x_iy_j=\lambda_{ij}^{-1}y_jx_i,& 1\leq i<j\leq n\\
&y_iy_j=\lambda_{ij}y_jy_i,\hspace{.6cm}   y_ix_j=q_i^{-1}\lambda_{ij}^{-1}x_jy_i,& 1\leq i<j\leq n\\
&\hspace{.2cm}x_iy_i-q_iy_ix_i=1+\sum_{k=1}^{i-1}(q_k-1)y_kx_k & 1\leq i\leq n.
\end{align*}
\end{defi}
The quantum Weyl algebra ${A_n^{\underline{q},\Lambda}}$, which arises from the work of Maltsiniotis on noncommutative differential calculus \cite{gm}, has been extensively studied in \cite{ad,gz,krg,daj}. Analogous to the Weyl algebra, the quantum version can be viewed as algebras of partial $q$-difference operators on quantum affine spaces (see \cite{aj}). The algebra $A_n^{\underline{q},\Lambda}(\mathbb{K})$ possesses an iterated skew polynomial presentation twisted by both automorphisms and derivations (see (\ref{ispp})). The prime spectrum, the automorphism group, and the isomorphism problem of quantum Weyl algebras were studied in \cite{krg,lr,gh2, aj,xt}. Most of these results concern the generic case in which the deformation parameters are non-roots of unity. Typically, when the deformation parameters of a quantized algebra are specialized to roots of unity, the algebra becomes a finitely generated module over its center and hence a polynomial identity (PI) algebra. The theory of PI algebras is a crucial tool to study this algebra, with the minimal degree and PI degree serving as fundamental invariants of the PI algebra. In the case of a prime affine PI algebra over an algebraically closed field, the PI degree bounds the $\mathbb{K}$-dimension of a simple module \cite[Theorem I.13.5]{brg} and this bound will be attained by the algebra \cite[Lemma III.1.2]{brg}. In this article, we will focus on quantum Weyl algebras in which the deformation parameters are roots of unity.
\par The quantum Weyl algebras have appeared in numerous works in mathematics and physics, including deformation theory, knot theory, category theory, and quantum mechanics. In quantum mechanics, the algebra of observables is noncommutative and the objects which play the role of points are the irreducible representations of the algebra of observables. Hence, it is natural to understand the irreducible representations of quantum Weyl algebras. For an infinite dimensional noncommutative algebra, classifying its simple modules is a challenging problem, in general. When $n=1$, the first quantum Weyl algebra $A_1^{q}(\mathbb{K})$ is the $\mathbb{K}$-algebra generated by $x,y$ with relation $xy-qyx=1$. If $q$ is a root of unity, then the irreducible representations of $A_1^{q}(\mathbb{K})$ are finite-dimensional and were first classified in \cite{dgo,dj,bav} and later explicitly described up to equivalence in \cite{blt} and \cite{he} via studying the matrix solutions $(X,Y)$ of the equation $xy-qyx=1$.
\par In 2019, C. Walton posed a problem \cite[Problem 2]{cw} concerning the explicit classification of irreducible representations of quantum Weyl algebra ${A_n^{\underline{q},\Lambda}}$ in the uniparameter case, where $q_i=q^{2}$ and $\lambda_{ij}=q^{-1}$, up to equivalence. In \cite{smsb2}, we compute the PI degree and classified simple modules over the (multiparameter) second quantum Weyl algebra (when $n=2$) at roots of unity, under the assumption that $\ord(\lambda_{12})$ divides $\ord(q_1)$ (call it divisibility condition).  Notably, this finding offered a solution to the problem for $n=2$ when $\ord(q)$ is odd. The present article extends this work by classifying simple modules over the (multiparameter) second quantum Weyl algebras at roots of unity without assuming the divisibility condition. In particular, this provides a complete solution to \cite[Problem 2]{cw} for the second quantum Weyl algebra.
\subsection*{Assumptions} We will use $\lambda:=\lambda_{12}$ in the definition of the multiparameter second quantum Weyl algebra and denote it by $A(q_1,q_2,\lambda):=A_2^{\underline{q},\Lambda}(\mathbb{K})$. In the root of unity setting, we assume that the $q_1,q_2$ and $\lambda$ are primitive $l_1$-th, $l_2$-th, and $l_3$-th roots of unity, respectively. The choice of parameters in the definition of $A(q_1,q_2,\lambda)$ ensures that $l_1\geq 2$ and $l_2\geq 2$. Throughout the article, $\mathbb{K}$ denotes an algebraically closed field of arbitrary characteristic, and all modules are right modules.
\subsection*{Arrangements}
The article is organized as follows. Section \ref{pre} reviews the essential facts about second quantum Weyl algebras and polynomial identity (PI) algebras. In addition, certain isomorphisms of the algebra $A(q_1,q_2,\lambda)$ are introduced that play a crucial role in classifying simple modules. In Section \ref{pisection}, we explicitly determine the PI degree of $A(q_1,q_2,\lambda)$ as a function of the parameters $q_1,q_2$ and $\lambda$. Section \ref{s4} categorizes simple modules over $A(q_1,q_2,\lambda)$ into three types based on the action of certain normal elements. Sections \ref{s5} and \ref{s6} focus on the construction and classification of Type-I simple modules, while Sections \ref{s7} and \ref{s8} address the construction and classification of Type-II simple modules. Finally, Section \ref{s9} examines Type-III simple modules, which can be viewed as modules over a factor of quantum affine space. 
\section{Preliminaries}\label{pre}
In this section, we recall essential facts about the second quantum Weyl algebras and polynomial identity algebras, which will be applied to compute the PI degree (an invariant) and to classify simple modules. 
\subsection{Commutation Relations} First recall that the algebra $A(q_1,q_2,\lambda)$ is an associative $\mathbb{K}$-algebra generated by $x_1,y_1,x_2,y_2$ together with the relations 
\begin{align*}
    x_1x_2&=q_1\lambda x_2x_1&x_1y_2&=\lambda^{-1}y_2x_1\\
    y_1y_2&=\lambda y_2y_1&y_1x_2&=q_1^{-1}\lambda^{-1}x_2y_1\\
    x_1y_1&-q_1y_1x_1=1&x_2y_2-q_2y_2x_2&=1+(q_1-1)y_1x_1.
\end{align*}
The algebra $A(q_1,q_2,\lambda)$ has an iterated skew polynomial presentation (cf. \cite{aj}) with respect to the order of the variables $y_1,x_1,y_2,x_2$ of the form 
\begin{equation}\label{ispp}
    \mathbb{K}[y_1][x_1,\tau_1,\delta_1][y_2,\sigma_2][x_2,\tau_2,\delta_2]
\end{equation}
where the $\tau_1,\tau_2$ and $\sigma_{2}$ are $\mathbb{K}$-linear automorphisms and the $\delta_j$ are $\mathbb{K}$-linear $\tau_j$-derivations  such that
\begin{align}\label{auto}
 \tau_1(y_1)&=q_1 y_1,&\ \sigma_2(y_1)&=\lambda^{-1}y_1,& \sigma_2(x_1)&=\lambda x_1,\nonumber\\
 \tau_2(y_1)&=q_1\lambda y_1,&\ \tau_2(x_1)&=(q_1\lambda)^{-1} x_1,&\ \tau_2(y_2)&=q_2y_2,\\
 \delta_1(y_1)&=1,&\delta_2(x_1)&=\delta_2(y_1)=0,&\ \delta_2(y_2)&=1+(q_1-1)y_1x_1. \nonumber
\end{align}
Thus the skew polynomial version of the Hilbert Basis Theorem (cf. \cite[Theorem 2.9]{mcr}) yields that the algebra $A(q_1,q_2,\lambda)$ is a prime affine Noetherian domain and the family of ordered monomials $\{y_1^{a_1}x_1^{b_1}y_2^{a_2}x_2^{b_2}:a_1,b_1,a_2,b_2\geq 0\}$ form a $\mathbb{K}$-basis. 
\par Let us define $z_0=1,\ z_1=x_1y_1-y_1x_1$ and $z_2=x_2y_2-y_2x_2$ with in $A(q_1,q_2,\lambda)$. Then we can easily verify the following equivalent expressions: 
\begin{equation}\label{nrel}
    z_1=1+(q_1-1)y_1x_1,\ \ z_2=z_1+(q_2-1)y_2x_2.
\end{equation}
These elements commute with each other, i.e., $z_1z_2=z_2z_1$, and are normal elements satisfying the following commutation relations with the generators:
\begin{align}\label{norcomm}
z_1x_1&=q_1^{-1}x_1z_1,&z_1y_1&=q_1y_1z_1,&z_1x_2&=x_2z_1,&z_1y_2&=y_2z_1,\nonumber\\
z_2x_1&=q_1^{-1}x_1z_2,&z_2y_1&=q_1y_1z_2,&z_2x_2&=q_2^{-1}x_2z_2,&z_2y_2&=q_2y_2z_2.
\end{align} 
\begin{lemm}\emph{(\cite[1.4]{aj})}\label{crq}
For $k\geq 1$, the following identities hold in $A(q_1,q_2,\lambda)$:
\begin{itemize}
\item[(1)] $x_i^ky_i=q_i^ky_ix_i^k+\displaystyle\frac{q_i^k-1}{q_i-1}z_{i-1}x_i^{k-1}$ for $i=1,2$. \\
 \item[(2)] $x_iy_i^k=q_i^ky_i^kx_i+\displaystyle\frac{q_i^k-1}{q_i-1}z_{i-1}y_i^{k-1}$ for $i=1,2$.
\end{itemize}
\end{lemm}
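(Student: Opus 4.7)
The plan is to prove both identities by induction on $k$, the base case $k=1$ being an immediate reformulation of the defining relations: using $z_0=1$ and (\ref{nrel}), these read $x_iy_i = q_iy_ix_i + z_{i-1}$ for $i=1,2$, which matches both (1) and (2) at $k=1$ since $(q_i-1)/(q_i-1)=1$. The crucial structural input for the induction is that $z_{i-1}$ commutes with both $x_i$ and $y_i$ — trivially for $i=1$ (since $z_0=1$) and, for $i=2$, via the normal element relations (\ref{norcomm}) which give $z_1x_2 = x_2z_1$ and $z_1y_2 = y_2z_1$.

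For the inductive step of (1), I would assume the identity for $k$ and left-multiply by $x_i$:
\begin{align*}
x_i^{k+1}y_i &= x_i\!\left( q_i^k y_i x_i^k + \tfrac{q_i^k-1}{q_i-1}\, z_{i-1} x_i^{k-1} \right)\\
&= q_i^k (x_iy_i)\,x_i^k + \tfrac{q_i^k-1}{q_i-1}\,(x_iz_{i-1})\,x_i^{k-1}.
\end{align*}
Substituting the base relation $x_iy_i = q_iy_ix_i + z_{i-1}$ into the first summand and the commutation $x_iz_{i-1}=z_{i-1}x_i$ into the second, both contributions to $z_{i-1}x_i^k$ collect to the coefficient
\[
q_i^k + \frac{q_i^k-1}{q_i-1} \;=\; \frac{q_i^{k+1}-1}{q_i-1},
\]
which closes the induction. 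The argument for (2) is entirely parallel: right-multiply the inductive hypothesis by $y_i$, apply $x_iy_i = q_iy_ix_i + z_{i-1}$ inside the factor $y_i^k(x_iy_i)$, and use $z_{i-1}y_i=y_iz_{i-1}$ to push $z_{i-1}$ past $y_i^k$ before collecting terms via the same telescoping.

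The step requiring care — rather than a genuine obstacle — is the commutation of $z_{i-1}$ with $x_i$ and $y_i$; this is already encoded in the choice of normal elements in (\ref{nrel}) and in (\ref{norcomm}), and is precisely what makes the right-hand sides of (1) and (2) clean. With that in hand, both inductive steps reduce to the elementary identity $q^k+(q^k-1)/(q-1)=(q^{k+1}-1)/(q-1)$, and the lemma follows.
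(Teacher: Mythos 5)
Your induction is correct: the base case is exactly the defining relations rewritten via (\ref{nrel}), the commutation of $z_{i-1}$ with $x_i$ and $y_i$ is supplied by (\ref{norcomm}) (trivially for $i=1$), and the coefficient telescoping $q_i^k+\tfrac{q_i^k-1}{q_i-1}=\tfrac{q_i^{k+1}-1}{q_i-1}$ closes both steps. The paper gives no proof of its own, citing the identities from \cite{aj}, and your argument is the standard one for such $q$-commutation formulas, so there is nothing to compare beyond noting that your write-up is a complete and correct substitute for the citation.
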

\begin{coro}\label{co}
Suppose that $q_1^l=q_2^l=\lambda^l=1$. Then the elements $x_1^l,y_1^l,x_2^l,y_2^l$ are in the center of $A(q_1,q_2,\lambda)$.
\end{coro}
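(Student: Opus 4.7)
The plan is to verify directly that each of $x_1^l, y_1^l, x_2^l, y_2^l$ commutes with every generator $x_1, y_1, x_2, y_2$, splitting the checks into two types: the ``easy'' commutations with non-conjugate variables (handled by iterating the quasi-commutation relations $l$ times), and the ``hard'' commutations with the conjugate variable (handled using Lemma \ref{crq}).

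For the easy checks, the point is that for $x_1^l$ the relations $x_1x_2 = q_1\lambda x_2x_1$ and $x_1y_2 = \lambda^{-1}y_2x_1$ iterate to give $x_1^l x_2 = (q_1\lambda)^l x_2 x_1^l$ and $x_1^l y_2 = \lambda^{-l} y_2 x_1^l$; since $q_1^l = \lambda^l = 1$, both expressions collapse to the commuting relation. Similarly, $y_1^l$ commutes with $x_2, y_2$ via the relations $y_1x_2 = q_1^{-1}\lambda^{-1}x_2 y_1$ and $y_1 y_2 = \lambda y_2 y_1$; and $x_2^l, y_2^l$ commute with $x_1, y_1$ because all the scalars appearing in these exchanges are monomials in $q_1$ and $\lambda$, whose $l$-th powers are $1$. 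This takes care of eight of the twelve required commutations.

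For the four ``hard'' commutations (each $l$-th power with its conjugate variable), the tool is Lemma \ref{crq}. Applying part (1) with $k=l$ and $i=1$ gives
\begin{equation*}
 x_1^l y_1 = q_1^l y_1 x_1^l + \frac{q_1^l - 1}{q_1 - 1}\, z_0\, x_1^{l-1},
\end{equation*}
and the hypothesis $q_1^l = 1$ makes the correction term vanish while collapsing the scalar in the leading term, so $x_1^l y_1 = y_1 x_1^l$. The same lemma with $i=2$ and $q_2^l = 1$ yields $x_2^l y_2 = y_2 x_2^l$ (the factor $z_1$ is irrelevant because its coefficient is zero). Part (2) of the lemma treats the symmetric pairs $x_1 y_1^l = y_1^l x_1$ and $x_2 y_2^l = y_2^l x_2$ in exactly the same manner.

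Since each of the four elements commutes with all four generators, and the generators generate the algebra, the conclusion follows. There is no real obstacle: the computation is a direct bookkeeping exercise, and the only conceptual point is noticing that Lemma \ref{crq} is designed precisely to kill the inhomogeneous correction term once $q_i^l = 1$.
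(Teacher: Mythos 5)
Your proof is correct and follows exactly the route the paper indicates (it simply says ``this follows from the defining relations together with Lemma \ref{crq}''): iterate the quasi-commutation relations for the eight non-conjugate pairs, and invoke Lemma \ref{crq} with $k=l$ for the four conjugate pairs, where $q_i^l=1$ both collapses the leading scalar and annihilates the inhomogeneous $z_{i-1}$ term. You have merely made explicit the bookkeeping the paper leaves to the reader.
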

\begin{proof}
    This follows from the defining relations of $A(q_1,q_2,\lambda)$ together with Lemma \ref{crq}.
\end{proof}
\subsection{Polynomial Identity Algebra}
In the roots of unity setting, the quantum Weyl algebra $A(q_1,q_2,\lambda)$ becomes a finitely generated module over its center, by Corollary \ref{co}. Hence as a result \cite[Corollary 13.1.13]{mcr}, the algebra $A(q_1,q_2,\lambda)$ becomes a polynomial identity algebra. This sufficient condition on the parameters to be PI algebra is also necessary. 
\begin{prop}\emph{(\cite[Proposition 2.4.1]{smsb2})} \label{finite}
The quantum Weyl algebra $A(q_1,q_2,\lambda)$ is a PI algebra if and only if the parameters $q_1,q_2$ and $\lambda$ are roots of unity.
\end{prop}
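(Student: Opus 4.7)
The ``if'' direction has essentially been outlined in the paragraph preceding the statement: with $l:=\lcmu(l_1,l_2,l_3)$, Corollary \ref{co} yields that $x_1^l,y_1^l,x_2^l,y_2^l$ are central, so $A(q_1,q_2,\lambda)$ is module-finite over its center and hence a PI algebra by \cite[Corollary 13.1.13]{mcr}. For the converse, my plan is to exhibit, for each of the three parameters $q_1$, $\lambda$, $q_2$, either a subalgebra of $A(q_1,q_2,\lambda)$ or a subalgebra of an Ore localization of it that is isomorphic to a ``small'' quantized algebra whose PI property forces the relevant parameter to be a root of unity. The two small algebras needed are the first quantum Weyl algebra $A_1^{\alpha}(\mathbb{K})=\mathbb{K}\langle x,y\mid xy-\alpha yx=1\rangle$ and the quantum plane $\mathbb{K}_{\alpha}[u,v]$; each is PI precisely when $\alpha$ is a root of unity, which one may see by noting that for generic $\alpha$ the center is $\mathbb{K}$ while Posner's theorem would force a prime affine PI algebra to be finite over its center. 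Since PI passes to subalgebras and to Ore localizations, each such identification forces the corresponding parameter to be a root of unity.

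The first two cases are immediate from the PBW monomial basis $\{y_1^{a_1}x_1^{b_1}y_2^{a_2}x_2^{b_2}\}$ recorded after (\ref{ispp}). The subalgebra $\mathbb{K}\langle x_1,y_1\rangle$ is spanned by the linearly independent monomials $y_1^{a_1}x_1^{b_1}$, so the canonical surjection $A_1^{q_1}(\mathbb{K})\twoheadrightarrow \mathbb{K}\langle x_1,y_1\rangle$ is an isomorphism, which under the PI hypothesis forces $q_1$ to be a root of unity. Likewise, $\mathbb{K}\langle y_1,y_2\rangle$ is spanned by the monomials $y_1^{a_1}y_2^{a_2}$ and is thus isomorphic to $\mathbb{K}_{\lambda}[y_1,y_2]$, forcing $\lambda$ to be a root of unity.

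The $q_2$-case is the main obstacle. Because $z_1=1+(q_1-1)y_1x_1$ is normal in the domain $A(q_1,q_2,\lambda)$ and commutes with both $x_2$ and $y_2$ by (\ref{norcomm}), its powers form an Ore set, and in $A[z_1^{-1}]$ the substitution $Y_2:=y_2\,z_1^{-1}$ gives
\[
x_2 Y_2 - q_2\, Y_2 x_2 \;=\; (x_2 y_2 - q_2 y_2 x_2)\,z_1^{-1} \;=\; z_1\cdot z_1^{-1} \;=\; 1.
\]
Thus $\mathbb{K}\langle x_2,Y_2\rangle\subseteq A[z_1^{-1}]$ is a homomorphic image of $A_1^{q_2}(\mathbb{K})$, and I need to upgrade this to an isomorphism, i.e., to show that the monomials $Y_2^a x_2^b = y_2^a x_2^b z_1^{-a}$ are $\mathbb{K}$-linearly independent in $A[z_1^{-1}]$. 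I plan to do this by clearing denominators: multiplying any putative relation by a sufficient power of $z_1$ reduces the question to linear independence inside $A$; isolating the homogeneous piece of fixed degree $b-a$ under the $\mathbb{Z}$-grading $\deg(x_2)=1$, $\deg(y_2)=-1$, $\deg(x_1)=\deg(y_1)=0$ separates distinct values of $b-a$, after which expanding $z_1^{N-a}$ in the PBW basis yields, for each remaining $a$, a leading term of $(y_1,x_1)$-total degree $2(N-a)$, and these distinct degrees force all coefficients to vanish. Once the isomorphism $\mathbb{K}\langle x_2,Y_2\rangle\cong A_1^{q_2}(\mathbb{K})$ is in hand, the PI hypothesis descends and forces $q_2$ to be a root of unity.
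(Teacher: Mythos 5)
Your argument is essentially correct, but note that the paper itself supplies no proof of this proposition: it is quoted verbatim from \cite{smsb2}, so there is no in-paper argument to compare against. As a self-contained proof your proposal holds up. The ``if'' direction via Corollary \ref{co} and \cite[Corollary 13.1.13]{mcr} is exactly the paper's stated reasoning. For the converse, the $q_1$ and $\lambda$ cases are immediate from the PBW basis as you say, and the real content is the $q_2$ case; your localization at the powers of the normal element $z_1$ (a legitimate Ore set in this Noetherian domain, with $A\hookrightarrow A[z_1^{-1}]$ since $A$ is a domain, and with $A[z_1^{-1}]$ inheriting the PI property because it embeds in the division ring of fractions, which is PI by Posner) together with the substitution $Y_2=y_2z_1^{-1}$ is a clean way to isolate a genuine copy of $A_1^{q_2}(\mathbb{K})$. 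The linear-independence verification you sketch does go through, but it can be streamlined: after clearing denominators, $y_2^ax_2^bz_1^{N-a}$ expands as $\sum_j d_{a,j}\,y_1^jx_1^jy_2^ax_2^b$ with $d_{a,N-a}\neq 0$ (using $(y_1x_1)^j=q_1^{*}y_1^jx_1^j+\text{lower terms}$ and the fact that commuting $y_1^jx_1^j$ past $y_2^ax_2^b$ costs only nonzero scalars), and since the PBW monomials $y_1^jx_1^jy_2^ax_2^b$ are already pairwise distinct for distinct triples $(j,a,b)$, the coefficient of $y_1^{N-a}x_1^{N-a}y_2^ax_2^b$ is a nonzero multiple of $c_{ab}$; no auxiliary $\mathbb{Z}$-grading is needed. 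The only point worth making explicit is the standard fact you invoke implicitly, namely that $Z(A_1^{\alpha})=Z(\mathbb{K}_{\alpha}[u,v])=\mathbb{K}$ when $\alpha$ is not a root of unity, which is what makes the Posner argument bite.
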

Kaplansky's Theorem has a striking consequence in the case of a prime affine PI algebra over an algebraically closed field.
\begin{prop}\emph{(\cite[Theorem I.13.5]{brg})}
Let $A$ be a prime affine PI algebra over an algebraically closed field $\mathbb{K}$ and $V$ be a simple module over $A$. Then $V$ is a finite-dimensional vector space over $\mathbb{K}$ with $\dime_{\mathbb{K}}(V)\leq \pideg (A)$.
\end{prop}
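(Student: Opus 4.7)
The plan is to reduce to a matrix algebra quotient of $A$ and then compare sizes via standard polynomial identities. Set $\mathfrak{m}:=\ann_A(V)$; since $V$ is simple, $\mathfrak{m}$ is a primitive ideal of $A$, and $B:=A/\mathfrak{m}$ is a primitive affine PI algebra (primitivity, affineness, and the PI property all descend to the quotient). Kaplansky's theorem then identifies $B$ as a simple Artinian ring $M_n(D)$, where $D$ is a division ring of finite dimension over its centre $Z=Z(B)$.

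Next I would pin down $Z$ and $D$ explicitly. The Nullstellensatz for affine PI algebras (due to Amitsur) guarantees that the centre of any primitive quotient of a finitely generated PI algebra over $\mathbb{K}$ is algebraic over $\mathbb{K}$. Since $\mathbb{K}$ is algebraically closed, $Z=\mathbb{K}$; and then $D$, being a finite-dimensional division algebra over the algebraically closed field $\mathbb{K}$, must itself coincide with $\mathbb{K}$. Hence $B\cong M_n(\mathbb{K})$, the module $V$ is the unique simple $M_n(\mathbb{K})$-module $\mathbb{K}^n$, and $\dime_{\mathbb{K}} V = n$ is finite.

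For the inequality $n\leq d:=\pideg(A)$ I would use the characterisation (coming from Posner's theorem) that $A$ satisfies every polynomial identity of the matrix algebra $M_d(\mathbb{K})$. In particular, by Amitsur--Levitzki, $A$ satisfies the standard identity $s_{2d}$, and this identity descends to the quotient $B=M_n(\mathbb{K})$. Since Amitsur--Levitzki also asserts that $M_n(\mathbb{K})$ fails $s_{2n-1}$, we are forced to have $2d\geq 2n$, i.e.\ $n\leq d$. The main obstacle is the PI-Nullstellensatz input used in the second paragraph; the remaining ingredients (Kaplansky's structure theorem for primitive PI rings, Posner's theorem, and Amitsur--Levitzki) combine in a routine way once $B$ has been identified as $M_n(\mathbb{K})$.
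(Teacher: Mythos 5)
Your argument is correct and is essentially the standard proof of the result the paper simply cites from Brown--Goodearl (the paper supplies no proof of its own): pass to $B=A/\ann_A(V)\cong M_n(\mathbb{K})$ via Kaplansky plus the PI-Nullstellensatz, then bound $n$ by $\pideg(A)$ through the standard identity $s_{2d}$ and Amitsur--Levitzki. The only point worth polishing is that ``$A$ satisfies every identity of $M_d(\mathbb{K})$'' should be restricted to multilinear identities in positive characteristic, but since you only invoke $s_{2d}$, which is multilinear, the argument is unaffected.
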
 This result provides the important link between the PI degree of a prime affine PI algebra over an algebraically closed field and its irreducible representations. Moreover, the upper bound PI-deg($A$) is attained for such an algebra $A$ (cf. \cite[Lemma III.1.2]{brg}). 
\begin{rema}\label{fdb} The algebra $A(q_1,q_2,\lambda)$ at roots of unity is classified as a prime affine PI algebra. From the above discussion, it is evident that each simple $A(q_1,q_2,\lambda)$-module is finite-dimensional and has dimension at most $\pideg {A(q_1,q_2,\lambda)}$. Therefore the computation of the PI degree for $A(q_1,q_2,\lambda)$ is of substantial importance. Section \ref{pisection} will focus on determining the PI degree of $A(q_1,q_2,\lambda)$.
\end{rema}
\subsection{Isomorphisms of $A(q_1,q_2,\lambda)$}\label{ceriso}
We discuss certain isomorphisms of the algebra $A(q_1,q_2,\lambda)$ derived from the symmetry of its defining relations, which play a crucial role in classifying simple modules. Let $\{x_1,y_1,x_2,y_2\}$ (resp. $\{X_1,Y_1,X_2,Y_2\}$) be a generating basis for $A(q_1,q_2,\lambda)$ (resp. $A(p_1,p_2,\gamma)$).
\begin{prop}\label{algiso}
Suppose $(p_1,p_2,\gamma)\in (\mathbb{K}^*)^3$ is one of the following tuples:
\begin{enumerate}
    \item $(q_1,q_2,\lambda)$,
    \item $(q_1^{-1},q_2,\lambda^{-1})$,
    \item $(q_1,q_2^{-1},(q_1\lambda)^{-1})$,
    \item $(q_1^{-1},q_2^{-1},q_1\lambda)$.
\end{enumerate}
Then the algebra $A(p_1,p_2,\gamma)$ is isomorphic to $A(q_1,q_2,\lambda)$.
\end{prop}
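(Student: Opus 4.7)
The plan is to construct, in each of the four cases, an explicit $\mathbb{K}$-algebra homomorphism from $A(p_1,p_2,\gamma)$ to $A(q_1,q_2,\lambda)$ by specifying images of the generators, then verify the six defining relations and argue the map is invertible. Case (1) is the identity, so only cases (2)--(4) require genuine work. The guiding heuristic is that inverting $q_i$ should correspond to interchanging $x_i$ and $y_i$ up to a scalar twist, while the accompanying modifications of $\gamma$ are dictated by the quadratic commutation relations.

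Accordingly, for case (2) the proposal is $X_1 \mapsto y_1$, $Y_1 \mapsto c_1 x_1$, $X_2 \mapsto c_2 x_2$, $Y_2 \mapsto y_2$ for scalars $c_1,c_2$ to be determined; for case (3), $X_1 \mapsto x_1$, $Y_1 \mapsto y_1$, $X_2 \mapsto y_2$, $Y_2 \mapsto c\, x_2$; and for case (4), $X_1 \mapsto y_1$, $Y_1 \mapsto c_1 x_1$, $X_2 \mapsto y_2$, $Y_2 \mapsto c_2 x_2$. With these assignments, the four pairwise quantum-commutation relations among the new generators follow at once from the corresponding identities in $A(q_1,q_2,\lambda)$, independently of the scalar factors. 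The first Heisenberg-type relation $X_1 Y_1 - p_1 Y_1 X_1 = 1$ is similarly routine and forces $c_1 = -q_1$ in cases (2) and (4).

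The genuinely delicate step is the second Heisenberg-type relation $X_2 Y_2 - p_2 Y_2 X_2 = 1 + (p_1 - 1)\, Y_1 X_1$, whose right-hand side itself involves $Y_1 X_1$. After substituting the swapped images, one picks up an $x_1 y_1$ which must be rewritten as $q_1 y_1 x_1 + 1$ before coefficients can be compared; this is precisely where the inhomogeneous term $(q_1-1) y_1 x_1$ in relation (\ref{nrel}) makes the analysis nontrivial. Matching coefficients then pins down the remaining scalar uniquely, giving $c_2 = q_1$ in (2), $c = -q_2$ in (3), and $c_2 = -q_1 q_2$ in (4).

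Finally, each constructed assignment is manifestly reversible: applying the analogous recipe starting from $A(p_1,p_2,\gamma)$ produces a candidate inverse whose composition with the original map fixes the generators and is therefore the identity on the whole algebra, yielding the desired isomorphism in each case. The main obstacle throughout is bookkeeping rather than conceptual: ensuring that the scalar factors chosen to satisfy the first Heisenberg relation remain compatible with the second one after the forced rewriting $x_1 y_1 = q_1 y_1 x_1 + 1$.
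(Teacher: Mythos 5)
Your proposal is correct and follows essentially the same route as the paper: in each case one writes down an explicit generator assignment swapping $x_i\leftrightarrow y_i$ with scalar twists forced by the two Heisenberg-type relations (your scalars sit on different generators than the paper's, e.g.\ $Y_1\mapsto -q_1x_1$ versus $X_1\mapsto -q_1y_1$, but the two choices differ only by composing with a rescaling automorphism, and I have checked that your assignments do satisfy all six relations). The only cosmetic caveat is that the composite of your map with the ``analogous recipe'' in the reverse direction fixes the generators only up to nonzero scalars, so it is a rescaling automorphism rather than the identity; this still yields bijectivity, matching the paper's (equally terse) justification.
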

\begin{proof}
    We define a rule $\theta:A(p_1,p_2,\gamma)\rightarrow A(q_1,q_2,\lambda)$ in each case by
    \begin{enumerate}
        \item $X_1\mapsto x_1,\ Y_1\mapsto y_1,\ X_2\mapsto x_2,\ Y_2\mapsto y_2$;
        \item $X_1\mapsto -q_1y_1,\ Y_1\mapsto x_1,\ X_2\mapsto x_2,\ Y_2\mapsto q_1y_2$;
        \item $X_1\mapsto x_1,\ Y_1\mapsto y_1,\ X_2\mapsto -q_2y_2,\ Y_2\mapsto x_2$;
        \item $X_1\mapsto -q_1y_1,\ Y_1\mapsto x_1,\ X_2\mapsto -q_2y_2,\ Y_2\mapsto q_1x_2$.
    \end{enumerate}
    We can easily verify that these images satisfy the defining relations of $A(p_1,p_2,\gamma)$ and therefore extend to bijective homomorphisms.
\end{proof}
\section{PI Degree for second Quantum Weyl algebra}\label{pisection}
In this section, we compute an explicit expression of the PI degree for second quantum Weyl algebra at roots of unity. In  \cite[Theorem 3.1]{smsb2}, we determined the PI degree under the divisibility condition. Here we aim to obtain this invariant without assuming the divisibility condition. To achieve this, we employ the derivation erasing process, independent of characteristic, as established by Leroy and Matczuk \cite[Theorem 7]{lm2}, and subsequently apply a key algorithm for computing the PI degree of quantum affine spaces introduced by De Concini and Procesi \cite[Proposition 7.1]{di}. Given a multiplicatively antisymmetric matrix $\Lambda=(\lambda_{ij})$, the quantum affine $n$-space is the $\mathbb{K}$-algebra $\mathcal{O}_{\Lambda}(\mathbb{K}^n)$ generated by the variables $x_1,\cdots ,x_n$ subject to the relations \[x_ix_j=\lambda_{ij}x_jx_i, \ \ \ \forall\ \ \ 1 \leq i,j\leq n.\]
In the roots of unity context, the algebra $\mathcal{O}_{\Lambda}(\mathbb{K}^n)$ becomes a PI algebra (cf. \cite[Proposition I.14.2]{brg}). The following result simplifies the key algorithm \cite[Proposition 7.1]{di} through the properties of the integral matrix associated with $\Lambda$.
\begin{prop}\emph{(\cite[Lemma 5.7]{ar})}\label{mainpi}
Suppose that $\lambda_{ij}=q^{h_{ij}}$ where $q \in \mathbb{K}^*$ is a primitive $m$-th root of unity and $H=(h_{ij})$ be a skew-symmetric integral matrix with $\ran(H)=2s$ and invariant factors $h_1,h_1,\cdots,h_s,h_s$. Then PI degree of $\mathcal{O}_{\Lambda}(\mathbb{K}^n)$ is given as \[\pideg\mathcal{O}_{\Lambda}(\mathbb{K}^n)=\prod_{i=1}^{s}\frac{m}{\gcdi(h_i,m)}.\]
\end{prop}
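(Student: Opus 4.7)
The plan is to reduce the question to the De Concini--Procesi algorithm (\cite[Proposition 7.1]{di}) and then repackage its output via the symplectic normal form for skew-symmetric integer matrices. First, pass from the quantum affine space $\mathcal{O}_\Lambda(\mathbb{K}^n)$ to its associated quantum torus $\mathcal{O}_\Lambda((\mathbb{K}^*)^n)$ by inverting $x_1,\ldots,x_n$. Using $q^m=1$, the $m$-th powers $x_i^m$ lie in the centre, so this is a central Ore localization of a prime affine PI algebra and therefore preserves PI degree. On the torus, any $P = (p_{ij}) \in GL_n(\mathbb{Z})$ provides a Laurent-monomial change of variables $u_j = \prod_i x_i^{p_{ji}}$ that realises an isomorphism onto another quantum torus, with commutation matrix $P H P^T$.

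Second, invoke the classical structure theorem for skew-symmetric integer matrices: one may choose $P$ so that
\begin{equation*}
  P H P^{T} = \operatorname{diag}\!\left(\begin{pmatrix} 0 & h_1 \\ -h_1 & 0 \end{pmatrix},\ldots,\begin{pmatrix} 0 & h_s \\ -h_s & 0 \end{pmatrix},\,0_{n-2s}\right),
\end{equation*}
with $h_1 \mid h_2 \mid \cdots \mid h_s$ the invariant factors appearing in the hypothesis. In the new generators the torus decomposes as a tensor product over $\mathbb{K}$ of $s$ rank-two quantum tori $u v = q^{h_i} v u$ together with a commutative Laurent polynomial algebra in $n-2s$ variables. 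Because PI degree is multiplicative under tensor products over an algebraically closed field and trivial on the commutative factor, the total PI degree equals $\prod_{i=1}^{s} \pideg \mathcal{O}_{q^{h_i}}((\mathbb{K}^*)^2)$.

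Third, for a rank-two quantum torus $uv = q^{h} vu$, setting $d := m/\gcdi(h,m)$ makes $q^{h}$ a primitive $d$-th root of unity; the centre is then generated by $u^{\pm d}, v^{\pm d}$, the algebra is a free module of rank $d^2$ over it, and an explicit clock-and-shift representation of dimension $d$ realises this bound. Thus each factor contributes $m/\gcdi(h_i,m)$, producing the claimed product.

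The principal obstacle is the opening reduction: verifying that PI degree survives the Ore localization to the quantum torus, so that the $GL_n(\mathbb{Z})$-action on $H$ can be carried out. Once this is secured, what remains is the classical symplectic normal form for skew-symmetric integer matrices combined with the standard PI-degree computation for a two-variable quantum torus.
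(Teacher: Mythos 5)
Your proposal is correct. The paper does not prove this proposition---it is quoted from \cite[Lemma 5.7]{ar}, which in turn rests on \cite[Proposition 7.1]{di}---and your argument (pass to the quantum torus by localizing at the central elements $x_i^m$, which preserves PI degree via Posner's theorem; apply the skew-symmetric normal form over $\mathbb{Z}$ to split the torus into rank-two factors; compute each factor's PI degree as $m/\gcdi(h_i,m)$) is exactly the standard proof underlying those cited results.
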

\noindent\textbf{Step 1:} (Derivation Erasing)
The algebra $A(q_1,q_2,\lambda)$ possesses an iterated skew polynomial presentation of the form
\[\mathbb{K}[y_1][x_1,\tau_1,\delta_1][y_2,\sigma_2][x_2,\tau_2,\delta_2]\] 
where the twisted automorphisms and derivations are mentioned in (\ref{auto}). Note that 
\[\delta_1(\tau_1(y_1))=q_1\tau_1(\delta_1(y_1))\ \ \text{and}\ \ \delta_2(\tau_2(y_2))=q_2\tau_2(\delta_2(y_2)).\]
The second equality holds trivially if $y_2$ is replaced by $y_1$ and $x_1$. So the pair $(\tau_i,\delta_i)$ satisfies the skew relation $\delta_j\tau_j=q_j\tau_j\delta_j$ $(q_j\neq 1)$ for each $j=1,2$. Moreover, we can check that all the hypotheses of the derivation erasing process in \cite[Theorem 7]{lm2} are satisfied by the skew polynomial presentation of the PI algebra $A(q_1,q_2,\lambda)$. Hence it follows that $\pideg A(q_1,q_2,\lambda)= \pideg \mathcal{O}_{\Lambda}(\mathbb{K}^{4})$ where the $(4\times 4)$-matrix of relations $\Lambda$ is
\begin{equation}\label{defma}
\Lambda=\begin{pmatrix}
1&q_1^{-1}&\lambda &q_1^{-1}\lambda^{-1}\\
q_1&1&\lambda^{-1}&q_1\lambda\\
\lambda^{-1}&\lambda&1&q_2^{-1}\\
q_1\lambda& q_1^{-1}\lambda^{-1}&q_2&1
\end{pmatrix}.
\end{equation}
\noindent\textbf{Step 2:} (Integral matrix) Now we wish to form an integral matrix associated with $\Lambda$ under the root of unity assumption. Recall that $q_1,q_2$ and $\lambda$ are primitive $l_1$-th, $l_2$-th, and $l_3$-th roots of unity, respectively. Suppose $\Gamma$ denotes the multiplicative group generated by $q_1,q_2$ and $\lambda$.  Then $\Gamma$ becomes a cyclic group of order $l:=\lcmu(l_1,l_2,l_3)$ with a generator $q$, say. Then we can choose $0<s_1,s_2,s_3\leq l$ with $s_i$ divides $l$ such that \[\langle q_1\rangle=\langle q^{s_1}\rangle, \ \langle q_2\rangle=\langle q^{s_2}\rangle\ \ \text{and}\ \ \langle \lambda \rangle=\langle q^{s_3}\rangle.\] Therefore there exists nonzero integers $k_1,k_2$ and an integer $k_3$ such that 
\begin{equation}\label{cgp}
q_1=q^{s_1k_1},\ q_2=q^{s_2k_2},\ \ \lambda=q^{s_3k_3}.
\end{equation}
Note that $l=s_1l_1=s_2l_2=s_3l_3$ and $\gcdi(k_1,l_1)=\gcdi(k_2,l_2)=1$. Thus the skew-symmetric integral matrix associated with $\Lambda$ becomes
\[B=\begin{pmatrix}
0&-s_1k_1&s_3k_3&-s_1k_1-s_3k_3\\
s_1k_1&0&-s_3k_3&s_1k_1+s_3k_3\\
-s_3k_3&s_3k_3&0&-s_2k_2\\
s_1k_1+s_3k_3&-s_1k_1-s_3k_3&s_2k_2&0
\end{pmatrix}.\] Here we can verify that $\det(B)=(s_1k_1s_2k_2)^{2}\neq 0$ and hence $\ran(B)=4$. \\
\textbf{Step 3:} (Invariant factors) Suppose that $h_1|h_1|h_2|h_2$ are the invariant factors for $B$. In this context, by applying Proposition \ref{mainpi}, we have 
\begin{equation}\label{pid7}
 \pideg A(q_1,q_2,\lambda)=\pideg(\mathcal{O}_{\Lambda}(\mathbb{K}^4))=\frac{l}{\gcdi(h_1,l)}\times \frac{l}{\gcdi(h_2,l)}.  
\end{equation}
In the following, we derive a simplified expression for the denominators of (\ref{pid7}). Now it follows from the relations between the invariant factors and determinantal divisors (cf. \cite[Chapter 2]{mn}) that
\begin{align*}
h_1&=\text{first~determinantal~divisor~of~$B$}=\gcdi(s_1k_1,s_2k_2,s_3k_3)\\
h_2&=\displaystyle\frac{\text{fourth~determinantal~divisor~of~$B$}}{\text{third~determinantal~divisor~of~$B$}}\\
&=\displaystyle\frac{(s_1k_1s_2k_2)^2}{\gcdi(s_1k_1(s_2k_2)^2,(s_1k_1)^2s_2k_2,s_1k_1s_2k_2s_3k_3)}\\
&=\displaystyle\frac{s_1k_1s_2k_2}{\gcdi(s_1k_1,s_2k_2,s_3k_3)}.
\end{align*}
Note that $h_1h_2=s_1k_1s_2k_2$. The following lemma is instrumental for obtaining a simplified expression of the PI degree as in (\ref{pid7}).
\begin{lemm}\label{itsimp} Under the above setting the following results hold:
\begin{enumerate}
    \item[(1)] $l=\lcmu(l_1,l_2)\gcdi(s_1,s_2)=\lcmu(s_1,s_2)\gcdi(l_1,l_2)$.
    \item [(2)] $l=\lcmu(l_1,l_2)\times \ord(\lambda^{\lcmu(l_1,l_2)})$ and $\ord(\lambda^{\lcmu(l_1,l_2)})=\gcdi(s_1,s_2)$.
\end{enumerate}
\end{lemm}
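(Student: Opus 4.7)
The plan is to work entirely inside the cyclic group $\Gamma=\langle q\rangle$ of order $l$, exploiting the lattice of subgroups of a cyclic group. For part (1), I would consider the subgroup $H:=\langle q_1,q_2\rangle\leq \Gamma$ and compute its order in two different ways. Since $\langle q_i\rangle=\langle q^{s_i}\rangle$, we have $H=\langle q^{s_1},q^{s_2}\rangle=\langle q^{\gcdi(s_1,s_2)}\rangle$, so $|H|=l/\gcdi(s_1,s_2)$. On the other hand, in a cyclic group the smallest subgroup containing two given subgroups of orders $l_1$ and $l_2$ has order exactly $\lcmu(l_1,l_2)$, because subgroups correspond bijectively to divisors of $l$. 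Thus $|H|=\lcmu(l_1,l_2)$, which yields the first equality $l=\lcmu(l_1,l_2)\gcdi(s_1,s_2)$.

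For the second equality in (1), I would apply the standard identity $\gcdi(a,b)\lcmu(a,b)=ab$ to the pair $(l_1,l_2)$, namely $l_1 l_2=\gcdi(l_1,l_2)\lcmu(l_1,l_2)$. Using $l_i=l/s_i$ and the expression for $\lcmu(l_1,l_2)$ from the first equality, a short computation gives $\gcdi(l_1,l_2)=l/\lcmu(s_1,s_2)$, which rearranges to $l=\lcmu(s_1,s_2)\gcdi(l_1,l_2)$.

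For part (2), the key input is the classical order formula $\ord(\lambda^k)=l_3/\gcdi(k,l_3)$ together with $\lcmu(a,b)=ab/\gcdi(a,b)$. Setting $k=\lcmu(l_1,l_2)$, I obtain
\[
\lcmu(l_1,l_2)\cdot \ord\bigl(\lambda^{\lcmu(l_1,l_2)}\bigr)=\frac{\lcmu(l_1,l_2)\cdot l_3}{\gcdi(\lcmu(l_1,l_2),l_3)}=\lcmu(l_1,l_2,l_3)=l,
\]
which establishes the first statement. Combining this with part (1) — which gives $\lcmu(l_1,l_2)\gcdi(s_1,s_2)=l$ — and cancelling $\lcmu(l_1,l_2)$ (which is nonzero) immediately yields $\ord(\lambda^{\lcmu(l_1,l_2)})=\gcdi(s_1,s_2)$.

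I do not anticipate a serious obstacle; the proof is essentially bookkeeping with gcd/lcm identities and the subgroup structure of cyclic groups. The only subtlety is recognising the two different descriptions of $\langle q_1,q_2\rangle$ in $\Gamma$ — one via the exponents $s_i$ and one via the orders $l_i$ — which is what forces the identity in (1). Once (1) is in hand, (2) follows formally from the order-of-a-power formula.
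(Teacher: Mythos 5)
Your proof is correct, and for part (1) it follows a genuinely different route from the paper's. The paper proves the first equality of (1) by rewriting $l = l_1 s_1 = l_2 s_2$ as $l = l_1\gcdi(s_1,s_2)\cdot\frac{s_1}{\gcdi(s_1,s_2)} = l_2\gcdi(s_1,s_2)\cdot\frac{s_2}{\gcdi(s_1,s_2)}$ and then invoking (implicitly) the fact that if $Au=Bv$ with $\gcdi(u,v)=1$ then $Au=\lcmu(A,B)$; it handles the second equality by the symmetric rewriting. You instead identify both sides as the order of the join $H=\langle q_1,q_2\rangle$ inside the cyclic group $\Gamma$: computing $|H|$ via the generating exponents gives $l/\gcdi(s_1,s_2)$, while computing it via the subgroup lattice (subgroups of a cyclic group of order $l$ correspond to divisors of $l$ with join given by $\lcmu$) gives $\lcmu(l_1,l_2)$. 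This makes the coprime-cofactor step of the paper unnecessary and is arguably more transparent. For the second equality of (1) you use the identity $l_1 l_2 = \gcdi(l_1,l_2)\lcmu(l_1,l_2)$ combined with the first equality, which is a small variation on the paper's symmetric computation. Part (2) is handled exactly as in the paper: the order-of-a-power formula $\ord(\lambda^k)=l_3/\gcdi(k,l_3)$, the associativity/product identity for $\lcmu$, and then comparison with (1).
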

\begin{proof} (1) The equality $l=l_1s_1=l_2s_2$ can be rewritten as 
\[l=l_1\gcdi(s_1,s_2)\times \frac{s_1}{\gcdi(s_1,s_2)}=l_2\gcdi(s_1,s_2)\times \frac{s_2}{\gcdi(s_1,s_2)}\] and 
\[l=s_1\gcdi(l_1,l_2)\times \frac{l_1}{\gcdi(l_1,l_2)}=s_2\gcdi(l_1,l_2)\times \frac{l_2}{\gcdi(l_1,l_2)}.\]
This implies that 
\begin{equation}\label{1eqn}
l=\lcmu\left(l_1\gcdi(s_1,s_2),l_2\gcdi(s_1,s_2)\right)=\lcmu(l_1,l_2)\gcdi(s_1,s_2)
\end{equation}
and 
\[l=\lcmu(s_1\gcdi(l_1,l_2),s_2\gcdi(l_1,l_2))=\lcmu(s_1,s_2)\gcdi(l_1,l_2).\]\\
(2) As $l=\lcmu(l_1,l_2,l_3)$, so it can be written as
\begin{equation}\label{2eqn}
l=\displaystyle\frac{\lcmu(l_1,l_2)\times l_3}{\gcdi(\lcmu(l_1,l_2),l_3)}=\lcmu(l_1,l_2)\times \ord(\lambda^{\lcmu(l_1,l_2)}).    
\end{equation}
Now comparing the equality (\ref{1eqn}) and (\ref{2eqn}), we obtain $\gcd(s_1,s_2)=\ord(\lambda^{\lcmu(l_1,l_2)})$.
\end{proof} 
\begin{prop}\label{impprop} The invariant factors $h_1$ and $h_2$ for $B$ satisfy the following relations
\[\gcdi(h_1,l)=1\ \ \text{and}\ \ \gcdi(h_2,l)=\displaystyle\frac{s_1s_2}{\ord(\lambda^{l_1l_2})}.\]
\end{prop}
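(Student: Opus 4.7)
The plan is to prove the two equalities separately, using Lemma~\ref{itsimp} together with elementary manipulations of $\gcdi$ and $\lcmu$.

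For the first equality $\gcdi(h_1,l)=1$, I would apply the general identity
\[ \gcdi(a_1,\ldots,a_n,m) = \gcdi\bigl(\gcdi(a_1,m),\ldots,\gcdi(a_n,m)\bigr) \]
with $a_i = s_ik_i$ and $m=l$, reducing the computation to $\gcdi(\gcdi(s_1k_1,l),\gcdi(s_2k_2,l),\gcdi(s_3k_3,l))$. For each $i$, since $l = s_i l_i$ and $\gcdi(k_i,l_i)=1$ (the case $i=3$ also holds, as $\langle\lambda\rangle=\langle q^{s_3}\rangle$ forces $\gcdi(k_3,l_3)=1$), one has $\gcdi(s_ik_i,l) = \gcdi(s_ik_i,s_il_i) = s_i\gcdi(k_i,l_i) = s_i$. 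Hence $\gcdi(h_1,l) = \gcdi(s_1,s_2,s_3) = l/\lcmu(l_1,l_2,l_3) = 1$, where the penultimate step is the standard dual identity $\gcdi(l/l_1,l/l_2,l/l_3) = l/\lcmu(l_1,l_2,l_3)$.

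For the second equality, start from $h_1h_2 = s_1k_1s_2k_2$, which combined with $\gcdi(h_1,l)=1$ yields $\gcdi(h_2,l) = \gcdi(s_1k_1s_2k_2,l)$. Inside $\Gamma=\langle q\rangle$, the element $q^{s_1k_1s_2k_2}$ equals $q_1^{s_2k_2}$ and so has order $l_1/\gcdi(s_2k_2,l_1)$; hence $\gcdi(s_1k_1s_2k_2,l) = s_1\gcdi(s_2k_2,l_1)$. A short $p$-adic check, using that $v_p(k_2)>0$ together with $\gcdi(k_2,l_2)=1$ forces $v_p(l_2)=0$ and therefore $v_p(l_1)\le v_p(l)=v_p(s_2)$, simplifies $\gcdi(s_2k_2,l_1) = \gcdi(s_2,l_1)$, giving $\gcdi(h_2,l) = s_1\gcdi(s_2,l_1)$.

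To conclude, it suffices to verify $\ord(\lambda^{l_1l_2}) = s_2/\gcdi(s_2,l_1)$. By Lemma~\ref{itsimp}(2), the element $\mu := \lambda^{\lcmu(l_1,l_2)}$ has order $\gcdi(s_1,s_2)$; using $l_1l_2 = \lcmu(l_1,l_2)\gcdi(l_1,l_2)$ one obtains
\[ \ord(\lambda^{l_1l_2}) = \ord\bigl(\mu^{\gcdi(l_1,l_2)}\bigr) = \frac{\gcdi(s_1,s_2)}{\gcdi\bigl(\gcdi(s_1,s_2),\gcdi(l_1,l_2)\bigr)}. \]
What remains is the purely number-theoretic identity
\[ \frac{\gcdi(s_1,s_2)}{\gcdi\bigl(\gcdi(s_1,s_2),\gcdi(l_1,l_2)\bigr)} = \frac{s_2}{\gcdi(s_2,l_1)}, \]
which I expect to be the main obstacle. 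The cleanest route is a prime-by-prime valuation analysis using $v_p(l) = v_p(s_i)+v_p(l_i)$ for each $i$ together with $v_p(l) = \max(v_p(l_1),v_p(l_2),v_p(l_3))$; the verification then splits into a handful of short cases according to which $l_i$ attains the maximum $p$-adic valuation, each case reducing to a direct comparison.
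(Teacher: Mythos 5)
Your proposal is correct, but it takes a genuinely different route from the paper's proof at both steps. For $\gcdi(h_1,l)=1$ the paper argues with orders of roots of unity: if $d=\gcdi(h_1,l)$ then $q_1^{l/d}=q_2^{l/d}=\lambda^{l/d}=1$, so each $l_i$ divides $l/d$ and hence $l\mid l/d$; your computation $\gcdi(s_ik_i,l)=\gcdi(s_ik_i,s_il_i)=s_i$ followed by $\gcdi(s_1,s_2,s_3)=l/\lcmu(l_1,l_2,l_3)=1$ is an equally valid, purely arithmetic version of the same fact (and you are right that $\gcdi(k_3,l_3)=1$ holds, even though the paper only records this for $i=1,2$). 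For the second identity the divergence is more substantial. The paper writes $s_1k_1s_2k_2=\lcmu(s_1,s_2)\cdot\gcdi(s_1,s_2)k_1k_2$ and $l=\lcmu(s_1,s_2)\gcdi(l_1,l_2)$ (Lemma \ref{itsimp}(1)), factors $\lcmu(s_1,s_2)$ out of the gcd, removes $k_1k_2$ using $\gcdi(k_1k_2,\gcdi(l_1,l_2))=1$, and then Lemma \ref{itsimp}(2) together with $\ord(\mu^k)=\ord(\mu)/\gcdi(\ord(\mu),k)$ delivers $s_1s_2/\ord(\lambda^{l_1l_2})$ directly. You instead evaluate $\gcdi(s_1k_1s_2k_2,l)$ as $s_1\gcdi(s_2k_2,l_1)=s_1\gcdi(s_2,l_1)$ via the order of $q_1^{s_2k_2}$ in $\Gamma$, which is correct but leaves you with the extra identity $\gcdi(s_1,s_2)/\gcdi\bigl(\gcdi(s_1,s_2),\gcdi(l_1,l_2)\bigr)=s_2/\gcdi(s_2,l_1)$ that the paper never has to confront. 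You only sketch that last step, but it does close, and more easily than you anticipate: using $x-\min(x,y)=\max(0,x-y)$ and $v_p(s_i)=v_p(l)-v_p(l_i)$, both sides have $p$-adic valuation $\max\bigl(0,\,v_p(l)-v_p(l_1)-v_p(l_2)\bigr)$, so no case analysis over which $l_i$ attains the maximal valuation is needed. In short, your argument is sound and self-contained; what the paper's use of Lemma \ref{itsimp}(1) buys is that matching the multiplicative shapes of $s_1k_1s_2k_2$ and $l$ avoids the asymmetric intermediate form $s_1\gcdi(s_2,l_1)$ and the reconciliation step it entails.
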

\begin{proof}
Recall the expressions of $h_1$ and $h_2$ from Step 3. Suppose $\gcdi(h_1,l)=d$. Then $d$ divides $s_1k_1,s_2k_2,s_3k_3$ and $l$. Now using the equalities in (\ref{cgp}) we can simplify
\[q_1^{\frac{l}{d}}=q^{\frac{s_1k_1l}{d}}=1,\ q_2^{\frac{l}{d}}=q^{\frac{s_2k_2l}{d}}=1\ \ \text{and}\ \ \lambda^{\frac{l}{d}}=q^{\frac{s_3k_3l}{d}}=1.\] This implies that $l_1,l_2$ and $l_3$ divide $\frac{l}{d}$ and hence $l=\lcmu(l_1,l_2,l_3)$ divides $\frac{l}{d}$. Thus we can conclude that $\gcdi(h_1,l)=1$.
\par For the second part, let us simplify the following using Lemma \ref{itsimp} and the equality $\gcdi(k_1,l_1)=\gcdi(k_2,l_2)=1$:
\begin{align*}
\gcdi(h_2,l)=\gcdi(h_1h_2,l)&=\gcdi(s_1k_1s_2k_2,l)\\
&=\lcmu(s_1,s_2)\gcdi(k_1k_2\gcdi(s_1,s_2),\gcdi(l_1,l_2))\\
&=\lcmu(s_1,s_2)\gcdi(\gcdi(s_1,s_2),\gcdi(l_1,l_2))\\
&=s_1s_2\frac{\gcdi(\gcdi(s_1,s_2),\gcdi(l_1,l_2))}{\gcdi(s_1,s_2)}\\
&=s_1s_2\frac{\gcdi(\ord(\lambda^{\lcmu(l_1,l_2)}),\gcdi(l_1,l_2))}{\ord(\lambda^{\lcmu(l_1,l_2)})}\\
&=\displaystyle\frac{s_1s_2}{\ord(\lambda^{l_1l_2})}.
\end{align*}
This completes the proof.
\end{proof}
\noindent \textbf{Step 4:} (Final step)
Finally simplify the equality (\ref{pid7}) with Proposition \ref{impprop} as follows: \[\pideg A(q_1,q_2,\lambda)=\pideg(\mathcal{O}_{\Lambda}(\mathbb{K}^4))=\frac{l^2}{s_1s_2}\times \ord(\lambda^{l_1l_2})=l_1l_2\ord(\lambda^{l_1l_2}).\]
\textbf{Notation:} Denote the $\ord(\lambda^{l_1l_2})$ by ${L}_3$. \\
Thus we have proved that
\begin{theo}\label{pidq}
The PI degree of quantum Weyl algebra $A(q_1,q_2,\lambda)$ is given by \[\pideg (A(q_1,q_2,\lambda))=l_1l_2{L}_3.\] 
\end{theo}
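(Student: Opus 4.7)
The plan is to reduce the computation of $\pideg A(q_1,q_2,\lambda)$ to the PI degree of an associated quantum affine $4$-space and then extract the answer from the De Concini--Procesi algorithm recorded in Proposition \ref{mainpi}. The reduction uses the iterated skew polynomial presentation (\ref{ispp}) together with Leroy--Matczuk's derivation erasing theorem, while the final simplifications rest on the arithmetic identities in Lemma \ref{itsimp}.

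Step by step: first I would verify that the automorphism/derivation pairs in (\ref{auto}) satisfy $\delta_j\tau_j=q_j\tau_j\delta_j$ with $q_j\neq 1$ for $j=1,2$, so the hypotheses of \cite[Theorem 7]{lm2} are met. This yields $\pideg A(q_1,q_2,\lambda)=\pideg\mathcal{O}_\Lambda(\mathbb{K}^4)$ with $\Lambda$ the $4\times 4$ matrix in (\ref{defma}). Next, since $\Gamma=\langle q_1,q_2,\lambda\rangle$ is cyclic of order $l=\lcmu(l_1,l_2,l_3)$ with generator $q$, writing $q_1=q^{s_1k_1}$, $q_2=q^{s_2k_2}$, $\lambda=q^{s_3k_3}$ with $s_il_i=l$ and $\gcdi(k_1,l_1)=\gcdi(k_2,l_2)=1$ converts $\Lambda$ into a skew-symmetric integer matrix $B$. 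A direct $4\times 4$ determinant computation gives $\det(B)=(s_1k_1s_2k_2)^2\neq 0$, so $\ran(B)=4$ and the invariant factors take the form $h_1\mid h_1\mid h_2\mid h_2$.

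The main computational effort lies in extracting these invariants from determinantal divisors: inspecting the $1\times 1$ minors of $B$ gives $h_1=\gcdi(s_1k_1,s_2k_2,s_3k_3)$, and the ratio of the fourth to the third determinantal divisor yields $h_2=s_1k_1s_2k_2/h_1$. Proposition \ref{mainpi} then produces the preliminary formula
\[
\pideg A(q_1,q_2,\lambda)=\frac{l}{\gcdi(h_1,l)}\cdot\frac{l}{\gcdi(h_2,l)}.
\]
I expect the main obstacle to be simplifying these gcds. For the first, writing $d=\gcdi(h_1,l)$, the conditions $d\mid s_ik_i$ force $q_1^{l/d}=q_2^{l/d}=\lambda^{l/d}=1$, hence $l\mid l/d$ and $d=1$. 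The genuinely delicate piece is $\gcdi(h_2,l)=\gcdi(s_1k_1s_2k_2,l)$: using the coprimality of $(k_i,l_i)$ with $(l_i)$ and the two identities $l=\lcmu(s_1,s_2)\gcdi(l_1,l_2)$ and $\gcdi(s_1,s_2)=\ord(\lambda^{\lcmu(l_1,l_2)})$ from Lemma \ref{itsimp}, successive reductions should collapse the expression to $s_1s_2/\ord(\lambda^{l_1l_2})$.

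With both gcds in hand, the final step is mechanical: substituting into the preliminary formula, invoking $s_il_i=l$, and recalling the notation $L_3=\ord(\lambda^{l_1l_2})$ gives
\[
\pideg A(q_1,q_2,\lambda)=\frac{l^{2}\,\ord(\lambda^{l_1l_2})}{s_1s_2}=l_1l_2\,L_3,
\]
which is the claimed formula.
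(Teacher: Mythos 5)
Your proposal follows the paper's own argument essentially verbatim: derivation erasing via Leroy--Matczuk to reduce to $\mathcal{O}_\Lambda(\mathbb{K}^4)$, passage to the integral matrix $B$, extraction of $h_1$ and $h_2$ from determinantal divisors, the computation $\gcdi(h_1,l)=1$ and $\gcdi(h_2,l)=s_1s_2/\ord(\lambda^{l_1l_2})$ via Lemma \ref{itsimp}, and the final substitution. The approach and all key steps coincide with the paper's Steps 1--4 and Proposition \ref{impprop}, so there is nothing further to compare.
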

\begin{coro}
Assume the divisibility condition $l_3:=\ord(\lambda)$ divides $l_1:=\ord(q_1)$. Then $L_3=\ord(\lambda^{l_1l_2})=1$. Consequently, the PI degree of $A(q_1,q_2,\lambda)$ established in \cite[Theorem 3.1]{smsb2} under this divisibility condition is recovered as a special case of the above theorem.
\end{coro}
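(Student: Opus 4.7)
The plan is to deduce the corollary as a direct numerical specialization of Theorem \ref{pidq}. The only thing to verify is that the parameter $L_3 = \ord(\lambda^{l_1 l_2})$, which governs the general formula, collapses to $1$ under the divisibility hypothesis $l_3 \mid l_1$. Once this is in hand, substituting into $\pideg A(q_1,q_2,\lambda) = l_1 l_2 L_3$ immediately yields $\pideg A(q_1,q_2,\lambda) = l_1 l_2$, which is the expression recorded in \cite[Theorem 3.1]{smsb2}.

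First I would recall that $\lambda$ is a primitive $l_3$-th root of unity, so by definition of order, $\lambda^N = 1$ if and only if $l_3 \mid N$. Assuming $l_3 \mid l_1$, one has the obvious chain of divisibilities $l_3 \mid l_1 \mid l_1 l_2$, which gives $\lambda^{l_1 l_2} = 1$ and therefore $L_3 = \ord(\lambda^{l_1 l_2}) = 1$, as claimed.

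No genuine obstacle arises at this stage: the heavy lifting (the derivation-erasing reduction to the quantum affine space $\mathcal{O}_{\Lambda}(\mathbb{K}^4)$, the computation of the invariant factors of $B$, and the arithmetic manipulations carried out in Lemma \ref{itsimp} and Proposition \ref{impprop}) has already been performed in establishing Theorem \ref{pidq}. The corollary reduces to the one-line observation above, and merely records the consistency of the new unconditional formula with the earlier divisibility-constrained result of \cite{smsb2}.
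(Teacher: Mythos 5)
Your proposal is correct and matches the paper's (implicit) argument: the divisibility $l_3 \mid l_1 \mid l_1l_2$ forces $\lambda^{l_1l_2}=1$, hence $L_3=1$, and substituting into Theorem \ref{pidq} gives $\pideg A(q_1,q_2,\lambda)=l_1l_2$. Nothing further is needed.
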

\begin{rema}\label{r1}
Note that $\gcdi(l_1,L_3)=\gcdi(l_2,L_3)=1$. Since $\gcdi(h_1,l)=1$, it follows from equality (\ref{pid7}) that $\pideg A(q_1,q_2,\lambda)$ is divisible by $l=\lcmu(l_1,l_2,l_3)$. 
\end{rema}
\begin{coro}\label{comopt}
In the following, we identify certain key commuting elements. 
    \begin{itemize}
    \item [(1)] The elements $x_1^{l_1L_3},y_1^{l_1L_3},x_2^{l_2},y_2^{l_2},z_1$ and $z_2$ commute with each other in $A(q_1,q_2,\lambda)$.
    \item [(2)] The elements $x_1^{l_1},y_1^{l_1},x_2^{l_2L_3},y_2^{l_2L_3},z_1$ and $z_2$ commute with each other in $A(q_1,q_2,\lambda)$.
\end{itemize}  
\end{coro}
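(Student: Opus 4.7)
The plan is to verify each pairwise commutation directly, exploiting how the chosen exponents $l_1L_3, l_2, l_1, l_2L_3$ make the various scalar factors in the relations collapse to~$1$. The two key inputs are the defining commutation relations together with Lemma~\ref{crq} (to handle $x_i^k$ against $y_i$), and the identities $q_1^{l_1}=q_2^{l_2}=1$ together with $\lambda^{l_1l_2L_3}=1$ coming from the definition $L_3=\ord(\lambda^{l_1l_2})$.

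First I would handle the \emph{cross-index} commutations. For any exponents $a,b\geq 0$ one has
\[
x_1^a x_2^b=(q_1\lambda)^{ab}x_2^bx_1^a,\quad x_1^a y_2^b=\lambda^{-ab}y_2^bx_1^a,\quad y_1^a y_2^b=\lambda^{ab}y_2^by_1^a,\quad y_1^a x_2^b=(q_1\lambda)^{-ab}x_2^by_1^a,
\]
obtained by iterating the defining relations. For part~(1) the products $ab$ that appear are of the form $l_1L_3\cdot l_2$, and the scalars are powers of $q_1\lambda$ or $\lambda$ raised to $l_1l_2L_3$; since $q_1^{l_1}=1$ and $\lambda^{l_1l_2L_3}=(\lambda^{l_1l_2})^{L_3}=1$, each scalar equals $1$. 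For part~(2) the products are $l_1\cdot l_2L_3$, giving the same exponent $l_1l_2L_3$, so the same cancellations apply.

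Next I would handle the \emph{same-index} commutations $x_i^{N}y_i^{N}=y_i^{N}x_i^{N}$. By Lemma~\ref{crq}(1),
\[
x_i^{N}y_i=q_i^{N}y_ix_i^{N}+\frac{q_i^{N}-1}{q_i-1}\,z_{i-1}x_i^{N-1}.
\]
For $i=1$ and $N=l_1L_3$ (part~(1)) or $N=l_1$ (part~(2)) we have $q_1^{N}=1$, so the correction term vanishes and $x_1^{N}$ commutes with $y_1$, hence with $y_1^{N}$. Analogously for $i=2$ with $N=l_2$ or $N=l_2L_3$, since $q_2^{l_2}=1$.

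Finally I would verify that $z_1,z_2$ commute with all the chosen powers and with each other. Commutation of $z_1$ with $z_2$ is already recorded after~(\ref{nrel}). From~(\ref{norcomm}), $z_j x_1=q_1^{-1}x_1 z_j$ and $z_j y_1=q_1 y_1 z_j$ for $j=1,2$, so $z_j x_1^{N}=q_1^{-N}x_1^{N}z_j$ and $z_j y_1^{N}=q_1^{N}y_1^{N}z_j$, which equal $x_1^{N}z_j$ and $y_1^{N}z_j$ whenever $l_1\mid N$. Likewise $z_2 x_2^{M}=q_2^{-M}x_2^{M}z_2$ and $z_2 y_2^{M}=q_2^{M}y_2^{M}z_2$ give commutation as soon as $l_2\mid M$, while $z_1$ already commutes with $x_2,y_2$. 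Both (1) and (2) only involve exponents divisible by $l_1$ (for the $x_1,y_1$ slots) and by $l_2$ (for the $x_2,y_2$ slots), so every required commutation follows. There is no real obstacle here; the content of the statement is simply that $l_1L_3$ and $l_2L_3$ are the smallest exponents that simultaneously kill the $q_1\lambda$-scalars across the two indices, and the proof is a short bookkeeping exercise organized along the lines above.
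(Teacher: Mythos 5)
Your verification is correct and follows exactly the route the paper intends: its proof is the one-line remark that the corollary "can be easily verified using the defining relations together with Lemma \ref{crq}," and your computation of the cross-index scalars $(q_1\lambda)^{l_1l_2L_3}=\lambda^{l_1l_2L_3}=1$, the vanishing of the correction term in Lemma \ref{crq}, and the $z_i$-commutations via (\ref{norcomm}) supplies precisely those omitted details. The closing aside about $l_1L_3$ and $l_2L_3$ being the \emph{smallest} such exponents is not part of the statement and is not needed, but it does no harm.
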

This can be easily verified using the defining relations of $A(q_1,q_2,\lambda)$ together with Lemma \ref{crq} and Remark \ref{r1}. These commuting elements will play an important role in the classification of simple modules. 
\section{{Simple Modules over $A(q_1,q_2,\lambda)$}}\label{s4}
In the root of unity context, the second quantum Weyl algebra $A(q_1,q_2,\lambda)$ is classified as a prime affine PI algebra. Suppose $M$ is a simple $A(q_1,q_2,\lambda)$-module. Then $M$ is finite-dimensional vector space and can have dimension at most $\pideg A(q_1,q_2,\lambda)=l_1l_2L_3$. As each $z_i$ is a normal element satisfying (\ref{norcomm}), then the $\mathbb{K}$-space \[\ker(z_i):=\{v\in M:vz_i=0\}\] is a submodule of the simple module $M$. Thus the action of a normal element $z_i$ on $M$ is either zero (called $M$ is $z_i$-torsion) or invertible (called $M$ is $z_i$-torsionfree). Based on this observation, the classification can be divided into the following cases:
\begin{itemize}
    \item [] Type-I: Simple $z_1,z_2$-torsionfree $A(q_1,q_2,\lambda)$-modules.
    \item [] Type-II: Simple $z_1$-torsionfree and $z_2$-torsion $A(q_1,q_2,\lambda)$-modules.
    \item [] Type-III: Simple $z_1$-torsion $A(q_1,q_2,\lambda)$-modules.
\end{itemize}
In the following sections, we will focus on the classification process for each type.
\section{Construction of Type-I Simple Modules}\label{s5} In this section we wish to construct Type-I simple modules over $A(q_1,q_2,\lambda)$.
\subsection{Simple Modules $\mathcal{V}_1(\underline{\mu})$} For $\underline{\mu}=(\mu_1,\mu_2,\mu_3,\mu_4)\in (\mathbb{K}^*)^4$, let $\mathcal{V}_1(\underline{\mu})$ be the $\mathbb{K}$-vector space with basis $e(a_1,a_2)$ where $0\leq a_1\leq l_1L_3-1$ and $0\leq a_2\leq l_2-1$. Define the action of each generator of $A(q_1,q_2,\lambda)$ on $\mathcal{V}_1(\underline{\mu})$ by
\begin{align*}
 e(a_1,a_2)x_1&=\begin{cases}
 \mu_1 e(a_1+1,a_2),& 0\leq a_1\leq l_1L_3-2\\
 \mu_1\lambda^{-a_2l_1L_3} e(0,a_2),& a_1=l_1L_3-1
\end{cases}\\
e(a_1,a_2)x_2&=\begin{cases}
 \mu_2 (q_1\lambda)^{a_1}e(a_1,a_2+1),& 0\leq a_2\leq l_2-2\\
 \mu_2(q_1\lambda)^{a_1} e(a_1,0),& a_2=l_2-1
\end{cases}\\
e(a_1,a_2)y_1&=\begin{cases}
 \mu_1^{-1}\displaystyle\frac{q_1^{a_1}\mu_3-1}{q_1-1}e(a_1-1,a_2),& a_1\neq 0\\
 \mu_1^{-1}\lambda^{a_2l_1L_3}\displaystyle\frac{\mu_3-1}{q_1-1} e(l_1L_3-1,a_2),& a_1=0
\end{cases}\\
e(a_1,a_2)y_2&=\begin{cases}
 \mu_2^{-1}\lambda^{-a_1}\displaystyle\frac{q_2^{a_2}\mu_4-\mu_3}{q_2-1}e(a_1,a_2-1),& a_2\neq 0\\
 \mu_2^{-1}\lambda^{-a_1}\displaystyle\frac{\mu_4-\mu_3}{q_2-1} e(a_1,l_2-1),& a_2=0
\end{cases} 
\end{align*}
We can easily verify that the $\mathbb{K}$-endomorphisms of $\mathcal{V}_1(\underline{\mu})$ defined by the above rules indeed preserve the defining relations of $A(q_1,q_2,\lambda)$. Thus $\mathcal{V}_1(\underline{\mu})$ becomes an $A(q_1,q_2,\lambda)$-module. The action of the normal elements $z_1$ and $z_2$ are given by $e(a_1,a_2)z_1=\mu_3q_1^{a_1}e(a_1,a_2)$ and $e(a_1,a_2)z_2=\mu_4q_1^{a_1}q_2^{a_2}e(a_1,a_2)$, respectively.
\begin{theo}\label{1st}
    The module $\mathcal{V}_1(\underline{\mu})$ is a simple $A(q_1,q_2,\lambda)$-module of dimension $l_1l_2L_3$.
\end{theo}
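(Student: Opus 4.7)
The plan is to handle the dimension claim directly and then reduce simplicity to two observations: a nonzero submodule must contain some basis vector, and from any basis vector one can reach every other. The dimension $\dime_{\mathbb{K}} \mathcal{V}_1(\underline{\mu}) = l_1 L_3 \cdot l_2 = l_1 l_2 L_3$ is immediate from the size of the indexing set $\{e(a_1,a_2) : 0 \leq a_1 \leq l_1 L_3 - 1,\ 0 \leq a_2 \leq l_2 - 1\}$, so the real content is simplicity. Let $N$ be a nonzero submodule of $\mathcal{V}_1(\underline{\mu})$; the goal is to show $N = \mathcal{V}_1(\underline{\mu})$.

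First I would exhibit a commutative subalgebra $C \subseteq A(q_1,q_2,\lambda)$ whose action on the chosen basis is diagonal with pairwise distinct joint eigenvalues. Take $C$ generated by $z_1$, $z_2$, and $x_2^{l_2}$. Pairwise commutativity follows from (\ref{norcomm}) together with $q_2^{l_2}=1$: indeed $z_1 z_2 = z_2 z_1$ holds by (\ref{nrel}), $z_1 x_2^{l_2} = x_2^{l_2} z_1$ since $z_1 x_2 = x_2 z_1$, and $z_2 x_2^{l_2} = q_2^{-l_2} x_2^{l_2} z_2 = x_2^{l_2} z_2$. A direct check against the defining action shows that $e(a_1,a_2)$ is a joint eigenvector with eigenvalue triple $\bigl(\mu_3 q_1^{a_1},\ \mu_4 q_1^{a_1} q_2^{a_2},\ \mu_2^{l_2}(q_1\lambda)^{a_1 l_2}\bigr)$. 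The $z_1$-eigenvalue recovers $a_1 \bmod l_1$, and given this the $z_2$-eigenvalue recovers $a_2 \bmod l_2$. Writing $a_1 = \alpha_1 + j l_1$ with $0 \leq \alpha_1 < l_1$ and $0 \leq j < L_3$, the third eigenvalue factors as $\mu_2^{l_2} q_1^{\alpha_1 l_2} \lambda^{\alpha_1 l_2} \cdot (\lambda^{l_1 l_2})^j$, and since $L_3 = \ord(\lambda^{l_1 l_2})$ by definition, the factor $(\lambda^{l_1 l_2})^j$ takes $L_3$ distinct values as $j$ varies, so $j$ and with it $a_1$ are determined uniquely. Thus $C$ acts on $\mathcal{V}_1(\underline{\mu})$ with simple joint spectrum.

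By Lagrange interpolation in commuting diagonalizable operators, projection onto any single joint eigenspace can be realised as a polynomial in $z_1, z_2, x_2^{l_2}$; applying the appropriate projection to any nonzero $v \in N$ forces a scalar multiple of some basis vector $e(a_1',a_2')$ to lie in $N$. Now iterated right multiplication by $x_1$ cycles $a_1 \mapsto a_1 + 1$ modulo $l_1 L_3$ with coefficient $\mu_1$ (or $\mu_1 \lambda^{-a_2' l_1 L_3}$ at the wraparound), both nonzero, so $N$ contains $e(a_1,a_2')$ for every $a_1$; then iterating $x_2$ cycles $a_2$ through all of $\{0,\ldots,l_2-1\}$ with nonzero coefficient $\mu_2(q_1\lambda)^{a_1}$, yielding every basis vector. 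Hence $N = \mathcal{V}_1(\underline{\mu})$, proving simplicity. The main obstacle is the middle paragraph: identifying $x_2^{l_2}$ as precisely the extra central-like operator that resolves the $L_3$-fold degeneracy of the pair $(z_1,z_2)$, a resolution that succeeds exactly because $L_3 = \ord(\lambda^{l_1 l_2})$.
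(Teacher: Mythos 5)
Your proposal is correct and follows essentially the same route as the paper: both arguments rest on the observation that the commuting operators $z_1$, $z_2$, $x_2^{l_2}$ act diagonally on the basis $e(a_1,a_2)$ with pairwise distinct joint eigenvalues (the paper extracts a basis vector by subtracting scalar multiples and inducting on length, which is just a hands-on version of your interpolation projection), and then both cycle through the basis using $x_1$ and $x_2$. Your explicit verification that the $x_2^{l_2}$-eigenvalue resolves the $L_3$-fold degeneracy via $L_3=\ord(\lambda^{l_1l_2})$ and $\gcdi(l_1,L_3)=1$ matches the paper's Case 3 exactly.
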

\begin{proof}
    Suppose $\mathcal{W}$ is a nonzero submodule of $\mathcal{V}_1(\underline{\mu})$. We claim that $\mathcal{W}=\mathcal{V}_1(\underline{\mu})$. Let $w=\sum\limits_{a,b}\xi_{ab} e(a,b)$, with $\xi_{ab}\in \mathbb{K}$, be a nonzero element of $\mathcal{W}$. Suppose there are two nonzero scalars, say $\xi_{kt},\xi_{rs}$ in the presentation of $w$. Here $(k,t)$ and $(r,s)$ are distinct pairs with $0\leq k,r\leq l_1L_3-1$ and $0\leq t,s\leq l_2-1$. Since $\mathcal{W}$ is a submodule, then the vectors $wz_1,wz_2$ and $wx_2^{l_2}$ are in $\mathcal{W}$, where
    \[wz_1=\sum\limits_{a,b}\mu_3q_1^{a}\xi_{ab}e(a,b),\ \ wz_2=\sum\limits_{a,b}\mu_4q_1^{a}q_2^{b}\xi_{ab}e(a,b)\]
    \[\text{and}\ \ wx_2^{l_2}=\sum\limits_{a,b}\mu_2(q_1\lambda)^{al_2}\xi_{ab}e(a,b),\ \ \text{respectively}.\]
Case 1: Suppose $k\nequiv r\ (\text{mod}\ l_1)$. Then the vector $wz_1-\mu_3q_1^{k}w\in \mathcal{W}$, where 
\[wz_1-\mu_3q_1^{k}w=\sum\limits_{a,b:a\neq k}\mu_3(q_1^{a}-q_1^{k})\xi_{ab}e(a,b).\]
This is a nonzero vector in $\mathcal{W}$ since $l_1$ does not divide $(r-k)$ and $\mu_3\xi_{rs}\neq 0$.\\ 
Case 2: Suppose $k\equiv r\ (\text{mod}\ l_1)$ and $1\leq t\neq s\leq l_1-1$. Then the vector  
\[wz_2-\mu_4q_1^{k}q_2^{t}w=\sum\limits_{a\neq k;b\neq t}\mu_4(q_1^{a}q_2^b-q_1^{k}q_2^t)\xi_{ab}e(a,b)\]
is a nonzero vector in $\mathcal{W}$, because $\mu_4(q_1^{r}q_2^{s}-q_1^kq_2^t)\xi_{rs}\neq 0$. \\
Case 3: Suppose $k\equiv r\ (\text{mod}\ l_1)$ and $t=s$. Then consider the vector $wx_2^{l_2}-\mu_2(q_1\lambda)^{kl_2}w\in\mathcal{W}$, where 
\[wx_2^{l_2}-\mu_2(q_1\lambda)^{kl_2}w=\sum\limits_{a,b:a\neq k}\mu_2\left((q_1\lambda)^{al_2}-(q_1\lambda)^{kl_2}\right)\xi_{ab}e(a,b).\] If $wx_2^{l_2}-\mu_2(q_1\lambda)^{kl_2}w=0$, then in particular we have 
\[(q_1\lambda)^{rl_2}=(q_1\lambda)^{kl_2}\implies \lambda^{(r-k)l_2}=1\implies L_3=\ord(\lambda^{l_1l_2})\ \text{divides}\ (r-k).\]
Also by our assumption, $l_1$ divides $(r-k)$. Thus we obtain $l_1L_3$ divides $(r-k)$, since $\gcdi(l_1,L_3)=1$. This contradicts the fact that $(k,t)$ and $(r,s)$ are distinct pairs. Therefore $wx_2^{l_2}-\mu_2(q_1\lambda)^{kl_2}w\in\mathcal{W}$ is nonzero.
\par Thus in either case $wz_1-\mu_3q_1^{k}w$ or $wz_2-\mu_4q_1^{k}q_2^{t}w$ or $wx_2^{l_2}-\mu_2(q_1\lambda)^{kl_2}w$ is a nonzero vector in $\mathcal{W}$ of length smaller than $w$. Hence by induction, it follows that $\mathcal{W}$ contains a basis vector of the form $e(a,b)$. Then by the action of the generators of $A(q_1,q_2,\lambda)$ on $e(a,b)$, we obtain $\mathcal{W}=\mathcal{V}_1(\underline{\mu})$. 
\end{proof}
\begin{theo}\label{isov1}
The simple $A(q_1,q_2,\lambda)$-modules $\mathcal{V}_1(\underline{\mu})$ and $\mathcal{V}_1{(\underline{\mu}')}$ are isomorphic if and only if there exist $0\leq r\leq l_1L_3-1$ and $0\leq s\leq l_2-1$ such that 
\begin{equation}\label{relation}
    \mu_1^{l_1L_3}=(\mu'_1\lambda^{-s})^{l_1L_3},\ \mu_2^{l_2}=(\mu'_2)^{l_2}(q_1\lambda)^{rl_2},\ \mu_3=q_1^{r}\mu'_3~\text{and}~\mu_4=q_1^{r}q_2^{s}\mu'_4.
    \end{equation}   
\end{theo}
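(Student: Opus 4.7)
My approach is to exploit a family of commuting ``diagonal'' operators on $\mathcal{V}_1(\underline{\mu})$ whose joint spectrum separates the basis. A direct computation from the action formulas shows that on the vector $e(a_1,a_2)$, the elements $z_1$, $z_2$, $x_1^{l_1L_3}$, and $x_2^{l_2}$ act by the scalars
\[\mu_3 q_1^{a_1},\quad \mu_4 q_1^{a_1}q_2^{a_2},\quad \mu_1^{l_1L_3}\lambda^{-a_2 l_1 L_3},\quad \mu_2^{l_2}(q_1\lambda)^{a_1 l_2},\]
respectively. Since $q_1$, $q_2$ have orders $l_1$, $l_2$ and $\lambda^{l_1l_2}$ has order $L_3$, the triple $(z_1, z_2, x_2^{l_2})$ alone already separates every pair of basis vectors: the first eigenvalue pins down $a_1 \bmod l_1$, the second then pins down $a_2$, and the third resolves the remaining $L_3$-fold ambiguity in $a_1$ via the factor $\lambda^{k l_1 l_2}$ of order $L_3$.

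\textbf{Both directions.} Given an isomorphism $\phi : \mathcal{V}_1(\underline{\mu}) \to \mathcal{V}_1(\underline{\mu}')$, the separation above forces $\phi$ to carry each one-dimensional joint eigenline to a joint eigenline, so $\phi$ has the form
\[\phi(e(a_1, a_2)) = c_{a_1, a_2}\, e'\bigl((a_1+r) \bmod l_1L_3,\ (a_2+s) \bmod l_2\bigr)\]
for some integers $r, s$ in the stated ranges and nonzero scalars $c_{a_1, a_2}$. Matching the four eigenvalues displayed above with their primed counterparts under the shift $(a_1, a_2) \mapsto (a_1 + r, a_2 + s)$ extracts the four equalities (\ref{relation}) essentially by inspection: $z_1$ gives $\mu_3 = q_1^r \mu'_3$, $z_2$ upgrades this to $\mu_4 = q_1^r q_2^s \mu'_4$, $x_2^{l_2}$ gives $\mu_2^{l_2} = (\mu'_2)^{l_2}(q_1\lambda)^{rl_2}$, and $x_1^{l_1L_3}$ gives $\mu_1^{l_1L_3} = (\mu'_1 \lambda^{-s})^{l_1L_3}$. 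For the converse, given $r, s$ satisfying (\ref{relation}), I would define $\phi$ by the same shift formula with the $c_{a_1, a_2}$ yet to be determined; intertwining with each of $x_1, x_2, y_1, y_2$ then reduces to a first-order recursion on $c_{a_1, a_2}$ in the relevant coordinate (for instance $c_{a_1+1, a_2}/c_{a_1, a_2} = \mu'_1/\mu_1$ in the interior of the $a_1$-range), and the loop-closure at the wrap-around edges of the torus $\mathbb{Z}/l_1L_3 \times \mathbb{Z}/l_2$ is precisely the four assumed equalities of (\ref{relation}). A consistent nonzero choice of $c_{a_1, a_2}$ then produces the desired module isomorphism.

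\textbf{Expected difficulty.} The principal care point is the sufficiency step: four generators induce recursions along two independent coordinate loops of the torus, so \emph{a priori} four closure conditions arise. I expect the $y_i$-recursions to reduce automatically to the corresponding $x_i$-recursions once one invokes the identities $z_1 = 1 + (q_1 - 1) y_1 x_1$ and $z_2 = z_1 + (q_2 - 1) y_2 x_2$ together with the $z_i$-eigenvalue matches already ensured by (\ref{relation}), so that only the two $x_i$-loops produce genuinely new closure conditions. Checking this compatibility cleanly—so that precisely the four equalities of (\ref{relation}) are both necessary and sufficient—is the principal calculational obstacle.
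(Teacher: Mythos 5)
Your proof proposal is correct and follows essentially the same route as the paper: both arguments use that the commuting operators $z_1$, $z_2$, $x_2^{l_2}$, $x_1^{l_1L_3}$ have one-dimensional joint eigenspaces spanned by the $e(a_1,a_2)$, which pins down $\phi(e(0,0))$ to a scalar multiple of a single basis vector and in turn forces the shift form; matching the four eigenvalues then yields (\ref{relation}), and the converse is handled by the same shift formula with scalars built from a first-order recursion (the paper simply writes the closed-form $\xi_{ab}$ rather than invoking the recursion). Your observation that the $y_i$-closure conditions are automatic once the $z_i$-eigenvalues match and $x_i$ act invertibly is the correct reason the four equalities in (\ref{relation}) are exactly the needed conditions.
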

\begin{proof}
    Suppose $\psi:\mathcal{V}_1(\underline{\mu})\rightarrow\mathcal{V}_1{(\underline{\mu}')}$ is a module isomorphism. Observe that $e(a_1,a_2)=\mu_1^{-a_1}\mu_2^{-a_2}e(0,0)x_2^{a_2}x_1^{a_1}$ holds in $\mathcal{V}_1(\underline{\mu})$. Then $\psi$ can be uniquely determined by the image $\psi(e(0,0))$, i,e., say \begin{equation}\label{iso1}    \psi(e(0,0))=\sum\limits_{a,b}\xi_{ab}e(a,b),~~~~\text{for}~~~\xi_{ab}\in\mathbb{K}^*.    
    \end{equation}
    Suppose two nonzero coefficients exist in (\ref{iso1}), say, $\xi_{kt}$ and $\xi_{rs}$. Then $(k,t)\neq (r,s)$. Now equating the coefficients of basis vectors on both sides of the equalities
\begin{align}\label{equality}
\psi(e(0,0)z_1)=\psi(e(0,0))z_1,\ \ &\psi(e(0,0)z_2)=\psi(e(0,0))z_2\nonumber\\
\text{and}\ \psi(e(0,0)x_2^{l_2})&=\psi(e(0,0))x_2^{l_2}
    \end{align}
    we obtain, respectively, 
    \[\mu_3=\mu_3'q_1^{k}=\mu_3'q_1^{r},\ \mu_4=\mu_4'q_1^{k}q_2^{t}=\mu_3'q_1^{r}q_2^{s}\ \ \text{and}\ \ \mu_2=\mu_2'(q_1\lambda)^{kl_2}=\mu_2'(q_1\lambda)^{rl_2}.\]
    This implies that \[k\equiv r \ (\mode l_1),\ t\equiv s\ (\mode l_2)\ \text{and}\ k\equiv r\ (\mode L_3),\] which contradicts the fact $(k,t)\neq (r,s)$. Therefore the image in (\ref{iso1}) is of the form 
    \[\psi(e(0,0))=\xi e(r,s),\] for some $\xi\in \mathbb{K}^*$ and for some $r,s$ with $0\leq r\leq l_1L_3-1,\ 0\leq s\leq l_2-1$. Now the equalities in (\ref{equality}) and $\psi(e(0,0)x_1^{l_1L_3})=\psi(e(0,0))x_1^{l_1L_3}$ gives the required relations (\ref{relation}) between $\underline{\mu}$ and $\underline{\mu}'$.
    \par Conversely assume the relations (\ref{relation}) between $\underline{\mu}$ and $\underline{\mu}'$. Let us define a $\mathbb{K}$-linear map $\phi:\mathcal{V}_1(\underline{\mu})\rightarrow\mathcal{V}_1{(\underline{\mu}')}$ by specifying the images of the basis vectors as \[\phi(e(a,b))=\xi_{ab}~e(a\dotplus r,b\oplus s),\] where $\dotplus$ and $\oplus$ are addition modulo $l_1L_3$ and $l_2$, respectively, and \[\xi_{ab}=\begin{cases}(\mu_1^{-1}\mu_1')^{a}(\mu_2^{-1}\mu_2')^{b}(q_1\lambda)^{rb}&\text{when}\  a\dotplus r>0\\
    (\mu_1^{-1}\mu_1')^{a}(\mu_2^{-1}\mu_2')^{b}(q_1\lambda)^{rb}\lambda^{-sl_1L_3}&\text{when}\  a\dotplus r=0.
    \end{cases}\] 
    Note that $\phi$ is a bijection. Then using the relations (\ref{relation}), we can easily verify that $\phi$ is an $A(q_1,q_2,\lambda)$-module isomorphism.
\end{proof}
\subsection{Simple Modules $\mathcal{V}_2(\underline{\mu})$} For $\underline{\mu}=(\mu_1,\mu_2)\in (\mathbb{K}^*)^2$, let $\mathcal{V}_2(\underline{\mu})$ denotes the $\mathbb{K}$-vector space with basis $e(a_1,a_2)$ where $0\leq a_1\leq l_1-1$ and $0\leq a_2\leq l_2-1$. Now define the action of each generator of $A(q_1,q_2,\lambda)$ on $e(a_1,a_2)$ by
\begin{align*}
   e(a_1,a_2)x_1&=\begin{cases}
       \mu_1 e(a_1+1,a_2),&0\leq a_1\leq l_1-2\\
       \lambda^{a_2l_1}\mu_1e(0,a_2),&a_1=l_1-1
   \end{cases}\\
   e(a_1,a_2)y_1&=\begin{cases}
       \mu_1^{-1}\displaystyle\frac{q_1^{a_1}\mu_2-1}{q_1-1}e(a_1-1,a_2),&a_1\neq 0\\
       \lambda^{-a_2l_1}\mu_1^{-1}\displaystyle\frac{\mu_2-1}{q_1-1}e(l_1-1,a_2),&a_1=0
   \end{cases}\\
   e(a_1,a_2)x_2&=\begin{cases}
       (q_1\lambda)^{a_1}\displaystyle\frac{q_2^{-a_2}-1}{q_2-1}\mu_2 e(a_1,a_2-1),& a_2\neq 0\\
       0,& a_2=0
   \end{cases}\\
   e(a_1,a_2)y_2&=\begin{cases}
       \lambda^{-a_1}e(a_1,a_2+1),&0\leq a_2\leq l_2-2\\
       0,&a_2=l_2-1
   \end{cases}
\end{align*}
We can easily verify that the above action indeed defines an $A(q_1,q_2,\lambda)$-module structure on $\mathcal{V}_2(\underline{\mu})$.
\begin{theo}
    The module $\mathcal{V}_2(\underline{\mu})$ is a simple $A(q_1,q_2,\lambda)$-module of dimension $l_1l_2$.
\end{theo}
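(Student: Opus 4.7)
The dimension claim is immediate from the stated basis, so the content is simplicity. My plan mirrors the template of Theorem \ref{1st}, but is slightly easier because one only needs to separate the two gradings modulo $l_1$ and modulo $l_2$ (rather than modulo $l_1L_3$). As a preparatory computation, using $z_1 = 1+(q_1-1)y_1x_1$ and $z_2 = z_1+(q_2-1)y_2x_2$ one checks directly that
\[ e(a_1,a_2)z_1 = q_1^{a_1}\mu_2\, e(a_1,a_2), \qquad e(a_1,a_2)z_2 = q_1^{a_1}q_2^{-a_2-1}\mu_2\, e(a_1,a_2), \]
so both normal elements act invertibly (confirming that $\mathcal{V}_2(\underline{\mu})$ is of Type-I).

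For the simplicity argument, let $\mathcal{W}$ be a nonzero submodule and pick $w = \sum \xi_{ab}\, e(a,b) \in \mathcal{W}$ with support of minimal size. Assuming for contradiction that the support contains two distinct pairs $(k,t)$ and $(r,s)$, I would split into two cases. If $k \neq r$, then $wz_1 - q_1^k\mu_2\, w \in \mathcal{W}$ kills the $(k,t)$-coefficient while the $(r,s)$-coefficient survives via the nonzero factor $q_1^r-q_1^k$, contradicting minimality. If $k = r$ but $t \neq s$, then $wz_2 - q_1^k q_2^{-t-1}\mu_2\, w$ serves the same purpose using $q_2^{-t-1} \neq q_2^{-s-1}$. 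Hence $w$ must be a nonzero scalar multiple of a single basis vector $e(\alpha,\beta) \in \mathcal{W}$.

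From $e(\alpha,\beta)$ I would then generate every other basis vector: $x_1$ permutes the first index cyclically with nonzero scalars $\mu_1$ (or $\mu_1\lambda^{\beta l_1}$ at the wraparound); $y_2$ steps the second index upward by the nonzero factor $\lambda^{-a}$; and $x_2$ steps it downward by $(q_1\lambda)^{a}\tfrac{q_2^{-b}-1}{q_2-1}\mu_2$, which is nonzero for $1 \leq b \leq l_2-1$ because $q_2$ has order exactly $l_2$. Unlike for $\mathcal{V}_1(\underline{\mu})$, the actions of $y_2$ and $x_2$ do not wrap around (they vanish at $a_2 = l_2-1$ and $a_2 = 0$ respectively), which is the only genuine subtlety: one must combine $y_2$ and $x_2$ moves to reach every $b$-value instead of relying on a single cyclic operator. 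No obstacle beyond this bookkeeping should arise.
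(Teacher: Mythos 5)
Your proposal is correct and follows essentially the same route as the paper, which simply records the eigenvalues $\mu_2q_1^{a_1}$ and $\mu_2q_1^{a_1}q_2^{-a_2-1}$ of $z_1,z_2$ on $e(a_1,a_2)$ and defers to the support-shrinking argument of Theorem \ref{1st}; your computations of these eigenvalues and your observation that $z_1,z_2$ alone already separate the basis (no analogue of the $x_2^{l_2}$ case is needed since the first index only runs modulo $l_1$) are exactly the intended details. The concluding generation step, combining upward $y_2$-moves and downward $x_2$-moves with the cyclic $x_1$-action, is likewise the standard completion and is carried out correctly.
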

\begin{proof}
  Each vector $e(a_1,a_2)$ is an eigenvector under the action of $z_1$ and $z_2$ with eigenvalues $\mu_2q_1^{a_1}$ and $\mu_2q_1^{a_1}q_2^{-a_2-1}$, respectively. Given this fact, the proof follows the same approach as that of Theorem \ref{1st}.
\end{proof}
\begin{theo}\label{isov2}
    The simple $A(q_1,q_2,\lambda)$-modules $\mathcal{V}_2(\underline{\mu})$ and $\mathcal{V}_2(\underline{\mu}')$ are isomorphic if and only if $\mu_1^{l_1}=(\mu_1')^{l_1}$ and $\mu_2=q_1^{r}\mu_2'$ for some $0\leq r\leq l_1-1$.
\end{theo}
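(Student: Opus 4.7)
The plan is to mimic the argument of Theorem~\ref{isov1}, but with the generator $e(0,0)$ and a smaller set of commuting invariants tailored to $\mathcal{V}_2(\underline{\mu})$. The key structural observations are that $e(0,0)$ generates $\mathcal{V}_2(\underline{\mu})$ (via repeated applications of $x_1$ and $y_2$), that $e(0,0)x_2=0$, and in fact that the $x_2$-annihilator in $\mathcal{V}_2(\underline{\mu}')$ is exactly the $l_1$-dimensional subspace spanned by $\{e'(a_1,0):0\le a_1\le l_1-1\}$, since the scalar $\frac{q_2^{-a_2}-1}{q_2-1}$ appearing in the action of $x_2$ vanishes only for $a_2=0$ in the allowed range.

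For the forward direction, let $\psi:\mathcal{V}_2(\underline{\mu})\to \mathcal{V}_2(\underline{\mu}')$ be an isomorphism. Since $e(0,0)$ generates the source, $\psi$ is determined by $\psi(e(0,0))$. The relation $\psi(e(0,0))x_2=0$ constrains this image to lie in the subspace identified above, so one may write $\psi(e(0,0))=\sum_a \xi_a e'(a,0)$. Applying the $z_1$-eigenvalue equation $\psi(e(0,0))z_1=\mu_2\psi(e(0,0))$ and comparing with $e'(a,0)z_1=\mu_2' q_1^a e'(a,0)$, and using that $q_1$ has order $l_1$, one forces support on a single basis vector $e'(r,0)$ with $0\le r\le l_1-1$ and $\mu_2=q_1^r\mu_2'$. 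Finally, applying $x_1^{l_1}$, which acts as $\mu_1^{l_1}$ on $e(0,0)$ and as $(\mu_1')^{l_1}$ on $e'(r,0)$ (the usual wrap-around factor $\lambda^{a_2l_1}$ equals $1$ since $a_2=0$), yields $\mu_1^{l_1}=(\mu_1')^{l_1}$.

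For the converse, assuming $\mu_1^{l_1}=(\mu_1')^{l_1}$ and $\mu_2=q_1^r\mu_2'$ for some $0\le r\le l_1-1$, I would define a $\mathbb{K}$-linear map $\phi:\mathcal{V}_2(\underline{\mu})\to \mathcal{V}_2(\underline{\mu}')$ by $\phi(e(a_1,a_2))=\xi_{a_1,a_2}\,e'(a_1\oplus r,a_2)$, where $\oplus$ here denotes addition modulo $l_1$, and determine the nonzero scalars $\xi_{a_1,a_2}$ recursively from the actions of $x_1$ and $y_2$ starting from $\xi_{0,0}=1$. Consistency of the $x_1$-action across the single wrap-around in the first coordinate reduces to the equality $\mu_1^{l_1}=(\mu_1')^{l_1}$, which holds by hypothesis. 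The $y_1$- and $x_2$-actions are then automatically intertwined by $\phi$, because their structure constants are governed entirely by the $z_1$- and $z_2$-eigenvalues, which agree on $e(a_1,a_2)$ and $e'(a_1\oplus r,a_2)$ in view of $\mu_2=q_1^r\mu_2'$.

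I expect the main obstacle to be organizing the forward direction cleanly: one must recognize that the correct distinguished vector is $e(0,0)$ and that just three invariants (being in the kernel of $x_2$, the $z_1$-eigenvalue, and the $x_1^{l_1}$-eigenvalue) already extract precisely the two stated conditions. Unlike Theorem~\ref{isov1}, the element $x_2^{l_2}$ is \emph{not} useful here since it acts as zero on $\mathcal{V}_2(\underline{\mu})$, so its role is taken over by $x_2$-annihilation; once this substitution is made, the converse construction is a routine bookkeeping exercise entirely parallel to the one at the end of Theorem~\ref{isov1}.
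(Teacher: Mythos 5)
Your proposal is correct and follows essentially the same route as the paper: pin down $\psi(e(0,0))$ as a scalar multiple of a single basis vector $e'(r,0)$ using the $x_2$-annihilation together with the $z_1$-eigenvalues (the paper orders these two steps the other way around but uses the same invariants), extract the two relations from $z_1$ and $x_1^{l_1}$, and for the converse build the shift-by-$r$ intertwiner whose only consistency obstruction around the $x_1$-cycle is $\mu_1^{l_1}=(\mu_1')^{l_1}$. Your observation that $x_2^{l_2}$ is useless here and is replaced by $x_2$-annihilation is exactly the adaptation the paper makes, so no further comment is needed.
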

\begin{proof}
    Suppose $\psi:\mathcal{V}_2(\underline{\mu})\rightarrow \mathcal{V}_2(\underline{\mu}')$ is a module isomorphism. Using an argument similar to the one in Theorem \ref{isov1}, we obtain $\psi(e(0,0))=\xi e(r,s)$ for some $\xi\in \mathbb{K}^*$ and for some $0\leq r\leq l_1-1,\ 0\leq s\leq l_2-1$. Simplifying the equality $\psi(e(0,0)x_2)=\psi(e(0,0))x_2$ yields $0=\xi e(r,s)x_2$ in $\mathcal{V}_2(\underline{\mu}')$. This implies, by the action of $x_2$ in $\mathcal{V}_2(\underline{\mu}')$, that $s=0$. Therefore $\psi(e(0,0))=\xi e(r,0)$ for some $\xi\in \mathbb{K}^*$ and for some $0\leq r\leq l_1-1$. Then the action of $x_1^{l_1}$ and $z_1$ under $\psi$ provide the required relations between $\underline{\mu}$ and $\underline{\mu}'$.
    \par Conversely assume the relations between $\underline{\mu}$ and $\underline{\mu}'$. Define a linear map $\phi:\mathcal{V}_2(\underline{\mu})\rightarrow \mathcal{V}_2(\underline{\mu}')$ on the $\mathbb{K}$-basis $e(a,b)$ of $\mathcal{V}_2(\underline{\mu})$ by $\phi(e(a,b))=(\mu_1^{-1}\mu'_1)^{a}\lambda^{-rb}e(a\oplus r,b)$, where $\oplus$ denotes addition modulo $l_1$. It can be easily verify that $\phi$ is an $A(q_1,q_2,\lambda)$-module isomorphism.
\end{proof}
\subsection{Simple Modules $\mathcal{V}_3(\underline{\mu})$} For $\underline{\mu}:=(\mu_1,\mu_2)\in (\mathbb{K}^*)^2$, let $\mathcal{V}_3(\underline{\mu})$ denotes the $\mathbb{K}$-vector space with basis $e(a_1,a_2)$ where $0\leq a_1\leq l_1-1$ and $0\leq a_2\leq l_2-1$. Now define the action of each generator of $A(q_1,q_2,\lambda)$ on $e(a_1,a_2)$ by
\begin{align*}
    e(a_1,a_2)x_1&=\begin{cases}
        \displaystyle\frac{q_1^{-a_1}-1}{q_1-1}e(q_1-1,a_2), &a_1\neq 0\\
        0,& a_1=0
    \end{cases}\\
    e(a_1,a_2)y_1&=\begin{cases}
        e(a_1+1,a_2),& 0\leq a_1\leq l_1-2\\
        0,& a_1=l_1-1
    \end{cases}\\
    e(a_1,a_2)x_2&=\begin{cases}
        \mu_1(q_1\lambda)^{-a_1}e(a_1,a_2+1),&0\leq a_2\leq l_2-2\\
         \mu_1(q_1\lambda)^{-a_1}e(a_1,0),&a_2=l_2-1        
    \end{cases}\\
    e(a_1,a_2)y_2&=\begin{cases}
        \mu_1^{-1}\lambda^{a_1}\displaystyle\frac{q_2^{a_2}\mu_2-q_1^{-1}}{q_2-1}e(a_1,a_2-1),&a_2\neq 0\\
        \mu_1^{-1}\lambda^{a_1}\displaystyle\frac{\mu_2-q_1^{-1}}{q_2-1}e(a_1,l_2-1),&a_2=0
    \end{cases}
\end{align*}
We can easily verify that the above action defines an $A(q_1,q_2,\lambda)$-module structure on $\mathcal{V}_3(\underline{\mu})$.
\begin{theo}
    The module $\mathcal{V}_3(\underline{\mu})$ is a simple $A(q_1,q_2,\lambda)$-module of dimension $l_1l_2$.
\end{theo}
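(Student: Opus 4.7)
The plan is to mirror the proof of Theorem~\ref{1st} in the simpler two-separator form used for $\mathcal{V}_2$. The dimension claim is immediate, since the given spanning set $\{e(a_1,a_2) : 0 \leq a_1 \leq l_1-1,\ 0 \leq a_2 \leq l_2-1\}$ is declared a basis of size $l_1 l_2$.

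First I would compute the action of the normal elements $z_1$ and $z_2$ on each basis vector. Using $z_1 = 1 + (q_1-1)y_1x_1$ and $z_2 = z_1 + (q_2-1)y_2x_2$ together with the specified actions, one obtains the uniform eigenvalue formulas
\[ e(a_1,a_2)\,z_1 = q_1^{-a_1-1}\,e(a_1,a_2), \qquad e(a_1,a_2)\,z_2 = \mu_2\,q_1^{-a_1}q_2^{a_2}\,e(a_1,a_2). \]
The boundary cases $a_1 = l_1-1$ (where $y_1$ kills) and $a_2 = 0$ (where $y_2$ wraps around to $l_2-1$) need to be checked separately, and the identities $q_1^{l_1} = q_2^{l_2} = 1$ force these boundary values to match the uniform formula. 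Because $q_1$ and $q_2$ are primitive $l_1$-th and $l_2$-th roots of unity respectively, these two eigenvalues together separate all pairs $(a_1,a_2) \in \{0,\dots,l_1-1\} \times \{0,\dots,l_2-1\}$.

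For simplicity, let $\mathcal{W}$ be a nonzero submodule and pick a nonzero $w = \sum_{a,b} \xi_{ab}\,e(a,b) \in \mathcal{W}$ of minimal length. If $w$ has two nonzero coefficients $\xi_{kt}, \xi_{rs}$ with $(k,t) \neq (r,s)$, then either (i) $k \neq r$, in which case $wz_1 - q_1^{-k-1}w \in \mathcal{W}$ is a nonzero vector of strictly smaller length, or (ii) $k = r$ and $t \neq s$, in which case $wz_2 - \mu_2 q_1^{-k}q_2^{t}w \in \mathcal{W}$ is nonzero and strictly shorter. Either possibility contradicts minimality, so $w$ is a scalar multiple of some basis vector $e(a,b)$, which therefore lies in $\mathcal{W}$.

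To finish I would show that $e(a,b)$ generates $\mathcal{V}_3(\underline{\mu})$. The $y_1$-action sends $e(a',b) \mapsto e(a'+1,b)$ with coefficient $1$ for $a' \leq l_1-2$, and the $x_1$-action sends $e(a',b) \mapsto e(a'-1,b)$ with coefficient $(q_1^{-a'}-1)/(q_1-1)$, which is nonzero for $1 \leq a' \leq l_1-1$ since $q_1$ is a primitive $l_1$-th root of unity. Hence every $e(a',b)$ with $0 \leq a' \leq l_1-1$ lies in the cyclic submodule generated by $e(a,b)$; then $x_2$ translates the second index cyclically with nonzero coefficient $\mu_1(q_1\lambda)^{-a'}$, producing every $e(a',b')$. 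The only step that requires genuine care is the uniform verification of the $z_i$-eigenvalue formulas across the two boundary cases; everything else is parallel to the earlier simplicity arguments for $\mathcal{V}_1$ and $\mathcal{V}_2$.
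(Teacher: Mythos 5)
Your proposal is correct and follows essentially the same approach as the paper, which records the $z_1,z_2$-eigenvalues $q_1^{-a_1-1}$ and $\mu_2 q_1^{-a_1}q_2^{a_2}$ and then invokes the argument of Theorem~\ref{1st}. Your observation that only the two separators $z_1,z_2$ are needed (rather than the three used for $\mathcal{V}_1(\underline{\mu})$) is a correct and slight streamlining, reflecting that the index range here is $l_1\times l_2$ rather than $l_1L_3\times l_2$.
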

\begin{proof}
    Each vector $e(a_1,a_2)$ is an eigenvector under the action of $z_1$ and $z_2$ with eigenvalues $q_1^{-a_1-1}$ and $\mu_2q_1^{-a_1}q_2^{a_2}$, respectively. Given this, the proof is analogous to that of Theorem \ref{1st}.
\end{proof}
\begin{theo}
The simple $A(q_1,q_2,\lambda)$-modules $\mathcal{V}_3(\underline{\mu})$ and $\mathcal{V}_3(\underline{\mu}')$ are isomorphic if and only if $\mu_1^{l_2}=(\mu_1')^{l_2}$ and $\mu_2=q_1^{s}\mu_2'$ for some $0\leq s\leq l_2-1$.
\end{theo}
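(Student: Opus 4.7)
The plan is to follow the template used in the proofs of Theorems~\ref{isov1} and \ref{isov2}: exploit that $\mathcal{V}_3(\underline{\mu})$ is cyclic, generated as an $A(q_1,q_2,\lambda)$-module by the single vector $e(0,0)$, via the identity $e(a,b)=\mu_1^{-b}e(0,0)x_2^{b}y_1^{a}$ (obtained from the explicit formulas defining the action). Hence any module homomorphism $\psi\colon\mathcal{V}_3(\underline{\mu})\to\mathcal{V}_3(\underline{\mu}')$ is uniquely determined by $\psi(e(0,0))$, and the isomorphism constraints can be read off by matching the actions of a few carefully chosen elements of $A(q_1,q_2,\lambda)$.

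For the forward direction, write $\psi(e(0,0))=\sum_{a,b}\xi_{ab}e(a,b)$. Since $e(0,0)x_1=0$ in $\mathcal{V}_3(\underline{\mu})$, the image must also be annihilated by $x_1$ in $\mathcal{V}_3(\underline{\mu}')$; because the scalar $\frac{q_1^{-a}-1}{q_1-1}$ is nonzero for $1\le a\le l_1-1$ (as $q_1$ is a primitive $l_1$-th root of unity), this forces $\xi_{a,b}=0$ for every $a\neq 0$. Next, match the $z_2$-eigenvalues: $e(0,0)$ has $z_2$-eigenvalue $\mu_2$ in $\mathcal{V}_3(\underline{\mu})$, while $e(0,b)$ has $z_2$-eigenvalue $\mu_2'q_2^{b}$ in $\mathcal{V}_3(\underline{\mu}')$. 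Since $q_2$ is a primitive $l_2$-th root of unity, only a single index $b=s\in\{0,\ldots,l_2-1\}$ can survive, so $\psi(e(0,0))=\xi e(0,s)$ for some $\xi\in\mathbb{K}^*$, and the asserted relation between $\mu_2$ and $\mu_2'$ drops out. Finally, the central element $x_2^{l_2}$ (see Corollary~\ref{comopt}) acts by $\mu_1^{l_2}$ on $\mathcal{V}_3(\underline{\mu})$ and by $(\mu_1')^{l_2}$ on $\mathcal{V}_3(\underline{\mu}')$; equating these through $\psi$ yields $\mu_1^{l_2}=(\mu_1')^{l_2}$.

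For the converse direction, assuming both stated relations, define a $\mathbb{K}$-linear bijection $\phi\colon\mathcal{V}_3(\underline{\mu})\to\mathcal{V}_3(\underline{\mu}')$ on basis vectors by $\phi(e(a,b))=(\mu_1^{-1}\mu_1')^{b}\,e(a,b\oplus s)$, where $\oplus$ denotes addition modulo $l_2$. Intertwining with $x_1$ and $y_1$ is essentially automatic, since these generators act only on the first index and their scalars do not depend on $b$. Compatibility with $x_2$ is routine except at the wrap-around $b\oplus s=l_2-1$; the obstruction there is precisely $(\mu_1^{-1}\mu_1')^{l_2}=1$, which is the first hypothesis. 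The remaining and most delicate check is compatibility with $y_2$, whose defining formula involves the un-rescaled constant $q_1^{-1}$ in the expression $(q_2^{a_2}\mu_2-q_1^{-1})/(q_2-1)$; here the hypothesis $(\mu_1')^{l_2}=\mu_1^{l_2}$ absorbs the scaling factor, while the stated relation between $\mu_2$ and $\mu_2'$ ensures that the bulk and wrap-around formulas agree.

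The main obstacle will be this last $y_2$-verification at the boundary $b=0$, since it is the only step in which both hypotheses must intervene simultaneously. Once this single identity is checked, $\phi$ is evidently a bijection and hence an $A(q_1,q_2,\lambda)$-module isomorphism, completing the proof.
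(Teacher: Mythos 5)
Your proof is correct and follows essentially the same route the paper intends when it declares the argument ``parallel to Theorem \ref{isov2}'': pin down $\psi(e(0,0))$ as a scalar multiple of a single basis vector $e(0,s)$ using the $x_1$-annihilation together with eigenvalue separation, read off the parameter relations from $x_2^{l_2}$ and $z_2$, and build an explicit basis-permuting intertwiner for the converse. One point you should make explicit: your $z_2$-eigenvalue matching gives $\mu_2=q_2^{s}\mu_2'$ (since $e(0,s)z_2=\mu_2'q_2^{s}e(0,s)$ in $\mathcal{V}_3(\underline{\mu}')$), not the $\mu_2=q_1^{s}\mu_2'$ printed in the statement; the printed relation is evidently a typo (compare Theorem \ref{isov2}, where the twist is by the relevant $z_i$-eigenvalue), and your converse verification of the $y_2$-action genuinely requires $q_2^{b}\mu_2=q_2^{b\oplus s}\mu_2'$, i.e.\ $\mu_2=q_2^{s}\mu_2'$. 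A second, minor slip: the $x_2$ wrap-around obstruction occurs at $b=l_2-1$ in the \emph{source} index (the target coefficient of $x_2$ does not depend on the second index), not at $b\oplus s=l_2-1$, though it produces the same condition $(\mu_1^{-1}\mu_1')^{l_2}=1$.
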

\begin{proof}
    The proof is parallel to Theorem \ref{isov2}.
\end{proof}
\subsection{Simple Modules $\mathcal{V}_4$} 
Let $\mathcal{V}_4$ be the $\mathbb{K}$-vector space with basis $e(a_1,a_2)$ where $0\leq a_1\leq l_1-1$ and $0\leq a_2\leq l_2-1$. The action of the generators of $A(q_1,q_2,\lambda)$ on each $e(a_1,a_2)$ is given by
\begin{align*}
    e(a_1,a_2)x_1&=\begin{cases}
        \displaystyle\frac{q_1^{-a_1}-1}{q_1-1}e(a_1-1,a_2),&a_1\neq 0\\
        0,& a_1=0
    \end{cases}\\
    e(a_1,a_2)y_1&=\begin{cases}
        e(a_1+1,a_2),&0\leq a_1\leq l_1-2\\
        0,&a_1=l_1-1
    \end{cases}\\
    e(a_1,a_2)x_2&=\begin{cases}
        (q_1\lambda)^{-a_1}\displaystyle\frac{q_2^{-a_2}-1}{q_2-1}q_1^{-1} e(a_1,a_2-1),&a_2\neq 0\\
        0,&a_2=0
    \end{cases}\\
    e(a_1,a_2)y_2&=\begin{cases}
        \lambda^{a_1}e(a_1,a_2+1),&0\leq a_2\leq l_2-2\\
        0,&a_2=l_2-1
    \end{cases}
\end{align*}
We can easily verify that the above action defines an $A(q_1,q_2,\lambda)$-module structure on $\mathcal{V}_4$.
\begin{theo}
    The module $\mathcal{V}_4$ is a simple $A(q_1,q_2,\lambda)$-module of dimension $l_1l_2$.
\end{theo}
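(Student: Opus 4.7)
The plan is to parallel the argument used in Theorem \ref{1st}, taking advantage of the fact that each basis vector of $\mathcal{V}_4$ is a simultaneous eigenvector for the commuting normal elements $z_1$ and $z_2$. A direct computation using the expressions $z_1 = 1 + (q_1-1)y_1x_1$ and $z_2 = z_1 + (q_2-1)y_2x_2$ from (\ref{nrel}), together with the defining action, gives
\[
e(a_1,a_2)z_1 = q_1^{-a_1-1}e(a_1,a_2), \qquad e(a_1,a_2)z_2 = q_1^{-a_1-1}q_2^{-a_2-1}e(a_1,a_2),
\]
uniformly in both interior and boundary cases (the identities $q_1^{l_1}=q_2^{l_2}=1$ handle the wrap-around at $a_1=l_1-1$ and $a_2=l_2-1$, where $y_1$ and $y_2$ respectively act by zero). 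As $(a_1,a_2)$ ranges over $\{0,\dots,l_1-1\}\times\{0,\dots,l_2-1\}$, the joint eigenvalue pairs are pairwise distinct.

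Given a nonzero submodule $\mathcal{W}$, I would pick a nonzero $w = \sum \xi_{ab} e(a,b) \in \mathcal{W}$ and assume it has at least two nonzero coefficients, $\xi_{kt}$ and $\xi_{rs}$. If $k \neq r$, then $wz_1 - q_1^{-k-1}w \in \mathcal{W}$ annihilates the $(k,t)$-entry while preserving the $(r,s)$-entry (since $q_1^{-r-1} \neq q_1^{-k-1}$); if $k=r$, then necessarily $t \neq s$ and $wz_2 - q_1^{-k-1}q_2^{-t-1}w$ plays the analogous role. In either case we obtain a nonzero element of $\mathcal{W}$ with strictly smaller support, so by induction $\mathcal{W}$ contains some basis vector $e(k,t)$.

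To finish, I would show that the cyclic submodule generated by a single $e(k,t)$ exhausts $\mathcal{V}_4$. The operator $y_1$ raises the first coordinate until it acts by zero at $a_1 = l_1-1$, so $e(k,t)y_1^{l_1-1-k}$ is a nonzero scalar multiple of $e(l_1-1,t)$. Subsequently, $x_1$ lowers the first coordinate with nonzero coefficient $\tfrac{q_1^{-a_1}-1}{q_1-1}$ whenever $1 \leq a_1 \leq l_1-1$, reaching every $e(a_1,t)$. An analogous climb-then-descend argument, this time using $y_2$ (with the scalar $\lambda^{a_1}$) followed by $x_2$ (with nonzero coefficients for $1 \leq a_2 \leq l_2-1$), fills in the second coordinate. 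Hence $\mathcal{W} = \mathcal{V}_4$.

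The one point requiring attention is the non-invertible ``truncation'' behaviour of the generators at the extreme indices: in contrast with Theorem \ref{1st}, here $e(0,\cdot)x_1=0$, $e(\cdot,0)x_2=0$, $e(l_1-1,\cdot)y_1=0$ and $e(\cdot,l_2-1)y_2=0$, so one cannot argue by cyclic rotation. This is exactly why the reachability step must be organised as ``first push up to the top index with $y_i$, then descend with $x_i$'', which, combined with the non-vanishing of the interior $q$-integer coefficients, secures the result.
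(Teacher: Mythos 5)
Your proof is correct and follows essentially the same route as the paper, which simply records the $z_1,z_2$-eigenvalues $q_1^{-a_1-1}$ and $q_1^{-a_1-1}q_2^{-a_2-1}$ on each $e(a_1,a_2)$ and defers to the argument of Theorem \ref{1st}. You supply the details the paper leaves implicit --- in particular that here $z_1$ and $z_2$ alone separate the basis vectors (no analogue of the $x_2^{l_2}$ step is needed since $a_1$ ranges only over $0,\dots,l_1-1$), and that reachability must be organised as a climb with $y_i$ followed by a descent with $x_i$ because of the truncation at the extreme indices.
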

\begin{proof}
  Each vector $e(a_1,a_2)$ is an eigenvector under the action of $z_1$ and $z_2$ with eigenvalues $q_1^{-a_1-1}$ and $q_1^{-a_1-1}q_2^{-a_2-1}$, respectively. Given this, the proof follows the same approach as that of Theorem \ref{1st}.
\end{proof}
\begin{rema}
    It is evident from the action of $A(q_1,q_2,\lambda)$ that 
    \begin{enumerate}
        \item no nonzero vector in $\mathcal{V}_1(\underline{\mu})$ is annihilated by $x_1$ or $x_2$.
        \item no nonzero vector in $\mathcal{V}_2(\underline{\mu})$ is annihilated by $x_1$, whereas $e(0,0)$ in $\mathcal{V}_2(\underline{\mu})$ is annihilated by $x_2$.
        \item  no nonzero vector in $\mathcal{V}_3(\underline{\mu})$ is annihilated by $x_2$, while $e(0,0)$ in $\mathcal{V}_3(\underline{\mu})$ is annihilated by $x_1$.        
        \item The vector $e(0,0)$ in $\mathcal{V}_4$ is annihilated by both $x_1$ and $x_2$.
    \end{enumerate}
    Thus the four simple $A(q_1,q_2,\lambda)$-modules are not isomorphic to each other.
\end{rema}
\section{Classification of Type-I Simple Modules}\label{s6}
Let $M$ be a $z_1,z_2$-torsionfree simple $A(q_1,q_2,\lambda)$-module. This means the actions of $z_1$ and $z_2$ on $M$ are invertible. By Schur's lemma, each central element $x_1^l,y_1^l,x_2^l$, and $y_2^l$ acts as a scalar multiple of the identity operator on $M$. Consequently, the operators $x_1, y_1,x_2$, and $y_2$ are either nilpotent or invertible on $M$. Recall the key commuting elements of $A(q_1,q_2,\lambda)$ as described in Corollary \ref{comopt}. In the subsequent analysis, we will determine the structure of simple $A(q_1,q_2,\lambda)$-modules $M$ based on whether these operators exhibit nilpotent or invertible actions. 
\subsection{The operators (\texorpdfstring{$x_1$}{TEXT} or \texorpdfstring{$y_1$}{TEXT})-invertible and (\texorpdfstring{$x_2$}{TEXT} or \texorpdfstring{$y_2$}{TEXT})-invertible}\label{ssec1}~\\
\textbf{Case 1:} Suppose that $M$ is a Type-I simple $A(q_1,q_2,\lambda)$-module with invertible actions of $x_1$ and $x_2$. Note that each of the elements 
\begin{equation}\label{e1}
x_1^{l_1L_3},~x_2^{l_2},~z_1,~z_2
\end{equation} of $A(q_1,q_2,\lambda)$ commutes with each other (see Corollary \ref{comopt}). As $M$ is finite-dimensional, there is a common eigenvector $v$ of the commuting operators (\ref{e1}). Take 
\[vx_1^{l_1L_3}=\alpha_1v,~vx_2^{l_2}=\alpha_2v,~vz_1=\gamma_1v,~vz_2=\gamma_2v,\]
for some scalars $\alpha_1,\alpha_2,\gamma_1,\gamma_2\in\mathbb{K}^*$. Then the vectors
\[vx_2^{a_2}x_1^{a_1},\ \ 0\leq a_1\leq l_1L_3-1,\ 0\leq a_2\leq l_2-1\] of $M$ are nonzero. Take $\underline{\mu}:=(\alpha_1^{\frac{1}{l_1L_3}},\alpha_2^{\frac{1}{l_2}},\gamma_1,\gamma_2)\in (\mathbb{K}^*)^4$. Define a $\mathbb{K}$-linear map $\phi_1:\mathcal{V}_1(\underline{\mu})\rightarrow M$ by $\phi_1(e(a_1,a_2))=\mu_1^{-a_1}\mu_2^{-a_2}vx_2^{a_2}x_1^{a_1}$. We can verify that $\phi_1$ is a nonzero $A(q_1,q_2,\lambda)$-module homomorphism. Thus by Schur's lemma, $\phi_1$ becomes a module isomorphism.\\
\noindent \textbf{Case 2:} Suppose $M$ is a Type-I simple module over $A(q_1,q_2,\lambda)$ with invertible actions of $y_1$ and $x_2$. Then by Proposition \ref{algiso}, $M$ can be a simple module over $A(p_1,p_2,\gamma)$ for $(p_1,p_2,\gamma)=(q_1^{-1},q_2,\lambda^{-1})$ with invertible actions of $X_1$ and $X_2$. This type of simple $A(p_1,p_2,\gamma)$-module is classified in [Subsection \ref{ssec1}, Case 1]. \\
\textbf{Case 3:} Suppose that $M$ is a Type-I simple $A(q_1,q_2,\lambda)$-module with invertible actions of $x_1$ and $y_2$. Then by Proposition \ref{algiso}, $M$ becomes a simple module over $A(p_1,p_2,\gamma)$ for $(p_1,p_2,\gamma)=(q_1,q_2^{-1},(q_1\lambda)^{-1})$ with invertible action of $X_1$ and $X_2$. This kind of simple $A(p_1,p_2,\gamma)$-module is classified in [Subsection \ref{ssec1}, Case 1]. \\
\textbf{Case 4:} Suppose $M$ is a Type-I simple module over $A(q_1,q_2,\lambda)$ with invertible actions of $y_1$ and $y_2$. Then by Proposition \ref{algiso}, $M$ becomes a simple module over $A(p_1,p_2,\gamma)$ for $(p_1,p_2,\gamma)=(q_1^{-1},q_2^{-1},q_1\lambda)$ with invertible action of $X_1$ and $X_2$. This type of simple $A(p_1,p_2,\gamma)$-module is classified in [Subsection \ref{ssec1}, Case 1].\\
\subsection{The operators (\texorpdfstring{$x_1$}{TEXT} or \texorpdfstring{$y_1$}{TEXT})-invertible and (\texorpdfstring{$x_2$}{TEXT} and \texorpdfstring{$y_2$}{TEXT})-nilpotent} \label{ssec2} Suppose that $M$ is a Type-I simple $A(q_1,q_2,\lambda)$-module with nilpotent action of each $x_2$ and $y_2$. Consider the $\mathbb{K}$-space 
$\ker(x_2):=\{v\in M~|~vx_2=0\}$. Clearly $\ker(x_2)\neq \{0\}$. Based on the commutation relation with $x_2$, the $\mathbb{K}$-space $\ker(x_2)$ is invariant under each of the commuting operators 
\begin{equation}\label{e2}
    x_1^{l_1},y_1^{l_1},y_2^{l_2L_3},z_1,z_2.
\end{equation} 
Therefore there is a common eigenvector $w$ in $\ker(x_2)$ of the commuting operators (\ref{e2}). So we can take 
\begin{align*}\label{commeig}
    wx_1^{l_1}&=\alpha_1 w,&~wx_2&=0,&~wz_1&=\gamma_1w,\\wy_1^{l_1}&=\beta_1 w,&~wy_2^{l_2L_3}&=\beta_2 w,&~wz_2&=\gamma_2w.
\end{align*}
for some scalars $\alpha_1,\beta_1,\beta_2\in \mathbb{K}$ and $\gamma_1,\gamma_2\in\mathbb{K}^*$. Then from the equality $q_2z_2=z_1+(q_2-1)x_2y_2$ we obtain $\gamma_2=q_2^{-1}\gamma_1$. As $y_2^l$ is central and $l_2L_3$ divides $l$, the nilpotent action of $y_2$ implies that $\beta_2=0$. Suppose there exists an integer $1\leq r\leq l_2L_3$ such that $wy_2^{r-1}\neq 0$ and $wy_2^{r}=0$. Now we can simplify the equality $(wy_2^{r})x_2=0$ using Lemma \ref{crq} as follows
\begin{align*}
    0=(wy_2^{r})x_2=wq_2^{-r}\left(x_2y_2^{r}-\frac{q_2^r-1}{q_2-1}\right)z_1y_2^{r-1}=-\gamma_1q_2^{-r}\frac{q_2^r-1}{q_2-1}wy_2^{r-1}.
\end{align*}
This implies that $q_2^r=1$ for some $1\leq r\leq l_2L_3$ and so $l_2$ divides $r$. Now if $r>l_2$, the $A(q_1,q_2,\lambda)$-submodule generated by $wy_2^{l_2}$ becomes a nonzero proper submodule of the simple module $M$. Therefore we obtain $r=l_2$.\\ 
\textbf{Case 1:} First consider $\alpha_1\neq 0$. Then $x_1$ is an invertible operator on $M$, as $x_1^l$ is central and $l_1$ divides $l$. Note that the vectors \[wy_2^{a_2}x_1^{a_1},\ \ 0\leq a_1\leq l_1-1,\ 0\leq a_2\leq l_2-1\] of $M$ are nonzero. Take $\underline{\mu}:=(\alpha_1^{\frac{1}{l_1}},\gamma_1)\in (\mathbb{K}^*)^2$. Define a $\mathbb{K}$-linear map $\phi_2:\mathcal{V}_2(\underline{\mu})\rightarrow M$ by $\phi_2(e(a_1,a_2))=\mu_1^{-a_1}wy_2^{a_2}x_1^{a_1}$. We can verify that $\phi_2$ is a nonzero $A(q_1,q_2,\lambda)$-module homomorphism. Thus by Schur's lemma, $\phi_2$ becomes a module isomorphism.\\
\noindent \textbf{Case 2:} Next consider $\beta_1\neq 0$. Then $y_1$ is an invertible operator on $M$. In this case, $M$ becomes a simple module over $A(p_1,p_2,\gamma)$ for $(p_1,p_2,\gamma)=(q_1^{-1},q_2,\lambda^{-1})$ with invertible action of $X_1$ and nilpotent actions of $X_2$ and $Y_2$. The structure of this simple module is classified in [Subsection \ref{ssec2}, Case 1].
\subsection{The operators (\texorpdfstring{$x_1$}{TEXT} and \texorpdfstring{$y_1$}{TEXT})-nilpotent and (\texorpdfstring{$x_2$}{TEXT} or \texorpdfstring{$y_2$}{TEXT})-invertible}\label{ssec3}
Suppose that $M$ is a Type-I simple $A(q_1,q_2,\lambda)$-module with nilpotent action of each $x_1$ and $y_1$. Then the $\mathbb{K}$-space 
\[\ker(x_1):=\{v\in M~|~vx_1=0\}\]
is a nonzero subspace of $M$. Note that this space is invariant under each of the commuting operators 
\begin{equation}\label{e3}
y_1^{l_1L_3},x_2^{l_2},y_2^{l_2},z_1,z_2.
\end{equation}
Therefore there is a common eigenvector $w$ in $\ker(x_1)$ of the commuting operators (\ref{e3}). Take 
\begin{align*}
wx_1&=0,&wx_2^{l_2}&=\alpha_2w,&wz_1=\gamma_1w,\\
wy_1^{l_1L_3}&=\beta_1w,&wy_2^{l_2}&=\beta_2w,&wz_2=\gamma_2w,
\end{align*}
for some scalars $\gamma_1,\gamma_2\in\mathbb{K}^*$ and $\alpha_2,\beta_1,\beta_2\in\mathbb{K}$. From this action, the equality $q_1z_1=1+(q_1-1)x_1y_1$ gives $\gamma_1=q_1^{-1}$. The nilpotent action of $y_1$ implies that $\beta_1=0$. Suppose there exists an integer $1\leq r\leq l_1L_3$ such that $wy_1^{r-1}\neq 0$ and $wy_1^{r}=0$. Now we can simplify the equality $(wy_1^{r})x_1=0$ as follows
\begin{align*}
    0=(wy_1^{r})x_1=wq_1^{-r}\left(x_1y_1^{r}-\frac{q_1^r-1}{q_1-1}\right)y_1^{r-1}=-q_1^{-r}\frac{q_1^r-1}{q_1-1}wy_1^{r-1}.
\end{align*}
This implies that $q_1^r=1$ for some $1\leq r\leq l_1L_3$ and hence $l_1$ divides $r$. Now if $r>l_1$, the $A(q_1,q_2,\lambda)$-submodule generated by $wy_1^{l_1}$ becomes a nonzero proper submodule of the simple module $M$. Therefore we obtain {$r=l_1$}.\\
\textbf{Case 1:} First assume that $\alpha_2\neq 0$. Then $x_2$ acts as an invertible operator on $M$, as $x_2^l$ is central and $l_2$ divides $l$. Therefore from the above discussion, we obtain that the vectors 
\[wx_2^{a_2}y_1^{a_1},\ \ \ 0\leq a_1\leq l_1-1,\ 0\leq a_2\leq l_2-1\] in $M$ are nonzero. Set $\underline{\mu}:=(\alpha_2^{\frac{1}{l_2}},\gamma_2)\in(\mathbb{K}^*)^2$. Define a $\mathbb{K}$-linear map $\phi_3:\mathcal{V}_3(\underline{\mu})\rightarrow M$ by $\phi_3(e(a_1,a_2))=\mu_2^{-a_2}wx_2^{a_2}y_1^{a_1}$. We can verify that $\phi_3$ is a nonzero $A(q_1,q_2,\lambda)$-module homomorphism. Thus by Schur's lemma, $\phi_3$ becomes a module isomorphism.\\
\noindent \textbf{Case 2:} Next assume that $\beta_2\neq 0$. This implies that the operator $y_2$ acts as an invertible operator on $M$. In this case, $M$ can be considered as a simple module over $A(p_1,p_2,\gamma)$ for $(p_1,p_2,\gamma)=(q_1,q_2^{-1},(q_1\lambda)^{-1})$ with invertible action of $X_2$ and nilpotent actions of $X_1$ and $Y_1$. The structure of this simple module is classified in [Subsection \ref{ssec3}, Case 1].
\subsection{The operators \texorpdfstring{$x_1,y_1,x_2,y_2$}{TEXT}-nilpotent}
Suppose that $M$ is a Type-I simple module over the algebra $A(q_1,q_2,\lambda)$, with nilpotent action of each $x_1$, $y_1$, $x_2$, and $y_2$. Then the $\mathbb{K}$-spaces $\ker(x_1)$ and $\ker(x_2)$ are nonzero subspaces of $M$. The nilpotent operator $x_2$ must keep the $\mathbb{K}$-space $\ker(x_1)$ invariant, because of the commutation relation between $x_1$ and $x_2$. Therefore $\ker(x_1)\cap \ker(x_2)\neq \{0\}$ and is invariant under each of the commuting operators 
\begin{equation}\label{e4}
y_1^{l_1},y_2^{l_2L_3},z_1,z_2.
\end{equation} 
Therefore there is a common eigenvector $w$ in $\ker(x_1)\cap \ker(x_2)$ of the commuting operators (\ref{e4}). Take 
\begin{align*}
wx_1&=0,&wy_1^{l_1}&=\beta_1w,&wz_1=\gamma_1w,\\
wx_2&=0,&wy_2^{l_2L_3}&=\beta_2w,&wz_2=\gamma_2w,
\end{align*}
for some scalars $\gamma_1,\gamma_2\in\mathbb{K}^*$ and $\beta_1,\beta_2\in\mathbb{K}$. Based on the equality (\ref{nrel}), we can
determine the constant values values $\gamma_1=q_1^{-1}$ and $\gamma_2=q_1^{-1}q_2^{-1}$. The fact that the actions of \(y_1\) and \(y_2\) are nilpotent means that \(\beta_1 = \beta_2 = 0\). Using a similar argument as in Subsections \ref{ssec2} and \ref{ssec3}, we can show that {\(l_1\) and \(l_2\) are the smallest positive integers for which $wy_1^{l_1-1}\neq 0,~wy_1^{l_1}=0$ and $wy_2^{l_2-1}\neq 0,~wy_2^{l_2}=0$}, respectively. Therefore the vectors \[wy_2^{a_2}y_1^{a_1},\ \ \ 0\leq a_1\leq l_1-1,\ 0\leq a_2\leq l_2-1\] in $M$ are non-zero. Define a $\mathbb{K}$-linear map $\phi_4:\mathcal{V}_4\rightarrow M$ by $\phi_4(e(a_1,a_2))=wy_2^{a_2}y_1^{a_1}$. We can verify that $\phi_4$ is a nonzero $A(q_1,q_2,\lambda)$-module homomorphism. Thus by Schur's lemma, $\phi_4$ becomes a module isomorphism.
\section{Construction of Type-II Simple Modules}\label{s7} In this section, we wish to construct Type-II simple modules over $A(q_1,q_2,\lambda)$. Denote $d:=\lcmu(l_1,l_3)$.
\subsection{Simple Modules $\mathcal{V}_5(\underline{\mu})$} For $\underline{\mu}=(\mu_1,\mu_2,\mu_3)\in(\mathbb{K}^*)^3$, let $\mathcal{V}_5(\underline{\mu})$ be the $\mathbb{K}$-vector space with basis $\{v_k:0\leq k\leq d-1\}$. Define the action of each generator of $A(q_1,q_2,\lambda)$ on $\mathcal{V}_5(\underline{\mu})$ by
\begin{align*}
    v_kx_1&=\begin{cases}
        \mu_1 v_{k+1}&0\leq k\leq d-2\\
        \mu_1 v_{0}& k=d-1
    \end{cases}&v_kx_2&=(q_1\lambda)^{k}\mu_2 v_{k}\\
    v_ky_1&=\begin{cases}
        \mu_1^{-1}\displaystyle\frac{q_1^k\mu_3-1}{q_1-1}v_{k-1}&k\neq 0\\
        \mu_1^{-1}\displaystyle\frac{\mu_3-1}{q_1-1}v_{d-1}&k=0
\end{cases}&v_ky_2&=\displaystyle\frac{\mu_2^{-1}\lambda^{-k}\mu_3}{1-q_2} v_k.
\end{align*}
We can easily verify that the above action defines a $A(q_1,q_2,\lambda)$-module structure on $\mathcal{V}_5(\underline{\mu})$.
\begin{theo}
    The module $\mathcal{V}_5(\underline{\mu})$ is a simple $A(q_1,q_2,\lambda)$-module of dimension $d=\lcmu(l_1,l_3)$.
\end{theo}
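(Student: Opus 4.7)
The plan is to mirror the proof of Theorem \ref{1st}: identify commuting diagonal operators whose joint eigenvalues separate the basis vectors, use them to shrink an arbitrary element of a nonzero submodule to a scalar multiple of a single basis vector, and then exploit the cyclic action of $x_1$ to fill out the whole module. The dimension is $d$ by construction, so the content lies entirely in proving simplicity.

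I would first compute, from the defined action together with the identities in \eqref{nrel}, the eigenvalues of the distinguished operators on each basis vector $v_k$. A direct calculation yields $v_k z_1 = \mu_3 q_1^k v_k$, $v_k x_2 = \mu_2(q_1\lambda)^k v_k$, and $v_k z_2 = 0$, which confirms that the module is $z_1$-torsionfree and $z_2$-torsion (so of Type-II) and singles out the candidate separating pair $(z_1, x_2)$.

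The decisive arithmetic input is the identity $\lcmu(l_1,\ord(q_1\lambda)) = d$. I would verify this in two directions: $(q_1\lambda)^d = q_1^d\lambda^d = 1$ since $l_1\mid d$ and $l_3\mid d$, giving $\ord(q_1\lambda)\mid d$; conversely, writing $L := \lcmu(l_1,\ord(q_1\lambda))$, the equality $\lambda^L = (q_1\lambda)^L q_1^{-L} = 1$ forces $l_3\mid L$, and hence $d = \lcmu(l_1,l_3)\mid L$. This rules out simultaneously $q_1^{j-k} = 1$ and $(q_1\lambda)^{j-k} = 1$ for $0\le j\ne k\le d-1$, so the two diagonal actions together separate the $d$ basis vectors.

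For the main argument, I would take a nonzero submodule $\mathcal{W}$ and a nonzero $w = \sum \xi_k v_k \in \mathcal{W}$, pick $k$ with $\xi_k\ne 0$, and consider $w z_1 - \mu_3 q_1^k w$ and $w x_2 - \mu_2(q_1\lambda)^k w$, both of which lie in $\mathcal{W}$. Their $v_j$-coefficients are nonzero multiples of $\xi_j$ precisely when $q_1^j\ne q_1^k$, respectively $(q_1\lambda)^j\ne (q_1\lambda)^k$. If both subtractions yield zero, the arithmetic identity forces the support of $w$ to be $\{k\}$, so $w = \xi_k v_k$; otherwise one of them is a nonzero vector of strictly smaller support, and an induction on support size places some basis vector $v_k$ into $\mathcal{W}$. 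Since $v_k x_1 = \mu_1 v_{k+1}$ with indices taken modulo $d$ and $\mu_1\in\mathbb{K}^*$, iterating the action of $x_1$ recovers every basis vector, giving $\mathcal{W} = \mathcal{V}_5(\underline{\mu})$. The only real obstacle is the arithmetic identity $\lcmu(l_1,\ord(q_1\lambda)) = d$; once it is in hand, the rest is a routine adaptation of the reduction scheme in Theorem \ref{1st}.
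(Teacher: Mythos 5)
Your proposal is correct and takes essentially the same route as the paper, which likewise notes that each $v_k$ is a joint eigenvector of $z_1$ and $x_2$ with eigenvalues $q_1^k\mu_3$ and $(q_1\lambda)^k\mu_2$ and then invokes the reduction scheme of Theorem \ref{1st}. You additionally make explicit the arithmetic identity $\lcmu(l_1,\ord(q_1\lambda))=\lcmu(l_1,l_3)=d$ that guarantees these eigenvalue pairs separate the $d$ basis vectors — a point the paper leaves implicit — and your verification of it is correct.
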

\begin{proof}
    Each vector $v_k$ is an eigenvector of the operators $z_1$ and $x_2$ corresponding to the eigenvalues $q_1^k\mu_3$ and $(q_1\lambda)^k\mu_2$, respectively. Given this, the proof follows the same approach as that of Theorem \ref{1st}.
\end{proof}
\begin{theo}
The simple $A(q_1,q_2,\lambda)$-modules $\mathcal{V}_5(\underline{\mu})$ and $\mathcal{V}_5(\underline{\mu}')$ are isomorphic if and only if there is an integer $r$ with $0\leq r\leq d-1$ such that $\mu_1^{d}=(\mu_1')^{d},\ \mu_2=(q_1\lambda)^r\mu_2'$ and $\mu_3=q_1^r\mu_3'$.
\end{theo}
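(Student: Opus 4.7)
The plan is to follow the strategy used for Theorems \ref{isov1} and \ref{isov2}. For the forward direction, assume $\psi:\mathcal{V}_5(\underline{\mu})\rightarrow\mathcal{V}_5(\underline{\mu}')$ is a module isomorphism. Observe that $v_k=\mu_1^{-k}v_0x_1^{k}$ for $0\leq k\leq d-1$, so $\psi$ is completely determined by the single image $\psi(v_0)=\sum_{k=0}^{d-1}\xi_k v_k$ with $\xi_k\in\mathbb{K}$.

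The main idea is to exploit that $v_0$ is a simultaneous eigenvector for the commuting operators $x_2$ and $z_1$, with eigenvalues $\mu_2$ and $\mu_3$ respectively, while in the target each $v_k$ is a simultaneous eigenvector with eigenvalues $(q_1\lambda)^k\mu_2'$ and $q_1^k\mu_3'$. Equating coefficients in $\psi(v_0 x_2)=\psi(v_0)x_2$ and $\psi(v_0 z_1)=\psi(v_0)z_1$, I obtain that whenever $\xi_k\neq 0$ we must have $\mu_2=(q_1\lambda)^k\mu_2'$ and $\mu_3=q_1^k\mu_3'$. If two distinct indices $k\neq k'$ both have nonzero coefficients, subtracting gives $(q_1\lambda)^{k-k'}=1$ and $q_1^{k-k'}=1$, which together force $\lambda^{k-k'}=1$; hence both $l_1$ and $l_3$ divide $k-k'$, so $d=\lcmu(l_1,l_3)$ does as well, contradicting $0\leq k,k'\leq d-1$. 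Therefore $\psi(v_0)=\xi v_r$ for a unique $0\leq r\leq d-1$ and $\xi\in\mathbb{K}^*$, yielding $\mu_2=(q_1\lambda)^r\mu_2'$ and $\mu_3=q_1^r\mu_3'$. The relation $\mu_1^d=(\mu_1')^d$ then drops out of $\psi(v_0x_1^d)=\psi(v_0)x_1^d$, since $v_0x_1^d=\mu_1^d v_0$ and $v_r x_1^d=(\mu_1')^d v_r$ by iterating the cyclic action of $x_1$.

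For the converse, given the three parameter relations and a choice of $r$, I would define a $\mathbb{K}$-linear map $\phi:\mathcal{V}_5(\underline{\mu})\rightarrow\mathcal{V}_5(\underline{\mu}')$ on the basis by
\[\phi(v_k)=\bigl(\mu_1^{-1}\mu_1'\bigr)^{k}v_{k\oplus r},\]
where $\oplus$ denotes addition modulo $d$, and then verify that $\phi$ intertwines the action of each of $x_1,y_1,x_2,y_2$. The checks for $x_2,z_1$ (and through $z_1$, for $y_1$) use the identities $q_1^d=\lambda^d=1$ to absorb the discrepancy between $k+r$ and $k\oplus r$, together with $\mu_2=(q_1\lambda)^r\mu_2'$ and $\mu_3=q_1^r\mu_3'$; the check for $x_1$ uses $\mu_1^d=(\mu_1')^d$ precisely to reconcile the wrap-around step $v_{d-1}x_1=\mu_1 v_0$ with its target counterpart.

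The only non-routine point will be the wrap-around bookkeeping for $x_1$ and $y_1$, which is exactly analogous to the scalar correction $\lambda^{-sl_1L_3}$ that appeared in Theorem \ref{isov1}; here the hypothesis $\mu_1^d=(\mu_1')^d$ ensures the simpler formula above is already consistent, so no extra correction factor is needed. Everything else reduces to the direct verification that each generator acts compatibly, just as in the proofs of Theorems \ref{isov1} and \ref{isov2}.
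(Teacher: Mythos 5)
Your proposal is correct and fills in exactly the argument the paper intends by its one-line remark that the proof is ``parallel to Theorem \ref{isov1}'': you separate the candidate images of $v_0$ using the simultaneous eigenvector structure for $x_2$ and $z_1$, use $d=\lcmu(l_1,l_3)$ to rule out two nonzero coefficients, and extract $\mu_1^d=(\mu_1')^d$ from $x_1^d$; the converse map $\phi(v_k)=(\mu_1^{-1}\mu_1')^k v_{k\oplus r}$ checks out, including the wrap-around cases. No gaps.
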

\begin{proof}
     The proof is parallel to Theorem \ref{isov1}.   
\end{proof}
\subsection{Simple Modules $\mathcal{V}_6({\mu})$} For each $\mu\in\mathbb{K}^*$, let $\mathcal{V}_6(\mu)$ be the $\mathbb{K}$-vector space with basis $\{v_k:0\leq k\leq l_1-1\}$. Define the action of the generators on $\mathcal{V}_6(\mu)$ by
\begin{align*}
    v_kx_1&=\begin{cases}
        \displaystyle\frac{q_1^{-k}-1}{q_1-1}v_{k-1}&k\neq 0\\
        0&k=0
    \end{cases}&v_kx_2&=(q_1\lambda)^{-k}\mu v_k\\
    v_ky_1&=\begin{cases}
        v_{k+1}&0\leq k\leq l_1-2\\
        0&k=l_1-1
\end{cases}&v_ky_2&=\displaystyle\frac{\lambda^kq_1^{-1}\mu^{-1}}{1-q_2}v_k    
\end{align*}
We can easily verify that the above action defines a $A(q_1,q_2,\lambda)$-module structure on $\mathcal{V}_6({\mu})$.
\begin{theo}
    The module $\mathcal{V}_6({\mu})$ is a simple $A(q_1,q_2,\lambda)$-module of dimension $l_1$.
\end{theo}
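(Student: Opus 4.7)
The dimension claim is immediate from the definition, since $\mathcal{V}_6(\mu)$ is declared to have basis $\{v_k:0\le k\le l_1-1\}$. For simplicity I would follow exactly the template set in Theorem \ref{1st}: exhibit a normal element that acts diagonally on the given basis with pairwise distinct eigenvalues, use this to whittle any nonzero submodule element down to a single basis vector, and then show that the ladder operators $x_1$ and $y_1$ allow us to walk from any one $v_j$ to every other $v_k$.

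The key preliminary computation is the action of $z_1$. Using $z_1=1+(q_1-1)y_1x_1$, for $0\le k\le l_1-2$ I compute $v_k y_1 x_1 = v_{k+1}x_1 = \frac{q_1^{-(k+1)}-1}{q_1-1}v_k$, which gives $v_k z_1 = q_1^{-(k+1)}v_k$. The boundary case $k=l_1-1$ is consistent with the same formula, since $v_{l_1-1}y_1=0$ forces $v_{l_1-1}z_1 = v_{l_1-1}$, and $q_1^{-l_1}=1$ because $q_1$ is a primitive $l_1$-th root of unity. The same primitivity makes the scalars $q_1^{-(k+1)}$, $0\le k\le l_1-1$, pairwise distinct. (As a sanity check, a parallel computation using $z_2=z_1+(q_2-1)y_2x_2$ and the diagonal actions of $x_2,y_2$ should give $v_k z_2=0$, confirming that $\mathcal{V}_6(\mu)$ is $z_2$-torsion and hence of Type-II.)

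For simplicity, let $\mathcal{W}$ be a nonzero submodule and pick $0\ne w=\sum_k\xi_k v_k\in\mathcal{W}$. If the support of $w$ contains two distinct indices $k\ne r$, then $w z_1-q_1^{-(k+1)}w=\sum_{a\ne k}(q_1^{-(a+1)}-q_1^{-(k+1)})\xi_a v_a$ is a nonzero vector of $\mathcal{W}$ whose support is strictly smaller, thanks to the distinct-eigenvalue property established above. Induction on the support size shows that $\mathcal{W}$ contains some scalar multiple of a single $v_j$, hence contains $v_j$ itself. From there, repeated application of $y_1$ produces $v_{j+1},v_{j+2},\ldots,v_{l_1-1}$, and repeated application of $x_1$ produces $v_{j-1},v_{j-2},\ldots,v_0$ (the coefficient $\frac{q_1^{-i}-1}{q_1-1}$ at each intermediate step is nonzero precisely because $q_1^{-i}\ne 1$ for $1\le i\le l_1-1$). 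Therefore $\mathcal{W}=\mathcal{V}_6(\mu)$, completing the proof.

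There is no substantive obstacle; the argument is essentially the same one used for Theorem \ref{1st} stripped down to a single coordinate. The only points that need care are the two nonvanishing facts invoked above — distinctness of the $z_1$-eigenvalues and nonvanishing of the $x_1$-ladder coefficients — both of which reduce to the standing assumption that $q_1$ has order exactly $l_1$.
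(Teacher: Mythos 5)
Your proof is correct and matches the paper's approach: the paper likewise reduces to the Theorem \ref{1st} template by observing that each $v_k$ is an eigenvector of $x_1y_1$ with eigenvalue $\frac{q_1^{-k}-1}{q_1-1}$, which is just an affine transformation of your $z_1$-eigenvalue $q_1^{-(k+1)}$ (via $x_1y_1=\frac{q_1z_1-1}{q_1-1}$), so the distinct-eigenvalue separation and the $x_1$/$y_1$ ladder argument are identical. Your verification that the ladder coefficients are nonzero and the $z_2$-torsion sanity check are correct.
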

\begin{proof}
    Each vector $v_k$ is an eigenvector under the action of $x_1y_1$ on $\mathcal{V}_6({\mu})$ with eigenvalue $\frac{q_1^{-k}-1}{q_1-1}$. Given this, the proof follows the same approach as that of Theorem \ref{1st}.
\end{proof}
\begin{theo}
    The simple $A(q_1,q_2,\lambda)$-modules $\mathcal{V}_6({\mu})$ and $\mathcal{V}_6({\mu'})$ are isomorphic if and only if $\mu=\mu'$. 
\end{theo}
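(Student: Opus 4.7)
The plan is to identify an invariant of $\mathcal{V}_6(\mu)$ that recovers the scalar $\mu$ directly, so that any isomorphism must match this invariant. The natural candidate is the eigenvalue of $x_2$ on the one-dimensional subspace $\ker(x_1)$.

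First I would show that $\ker(x_1) := \{w \in \mathcal{V}_6(\mu) : w x_1 = 0\}$ equals $\mathbb{K} v_0$. Writing a general element as $w = \sum_{k=0}^{l_1-1} c_k v_k$ and applying the defining action of $x_1$, one gets $w x_1 = \sum_{k=1}^{l_1-1} c_k \tfrac{q_1^{-k}-1}{q_1-1} v_{k-1}$. Since $q_1$ is a primitive $l_1$-th root of unity, the scalar $\tfrac{q_1^{-k}-1}{q_1-1}$ is nonzero for $1 \le k \le l_1-1$, and hence $wx_1 = 0$ forces $c_k = 0$ for all $k \ge 1$. So $\ker(x_1)$ is one-dimensional, spanned by $v_0$, on which $x_2$ acts as the scalar $\mu$ by the formula $v_0 x_2 = (q_1\lambda)^0 \mu\, v_0 = \mu v_0$.

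Now suppose $\psi : \mathcal{V}_6(\mu) \to \mathcal{V}_6(\mu')$ is an $A(q_1,q_2,\lambda)$-module isomorphism, and denote the distinguished basis of the codomain by $v_0', v_1', \ldots, v_{l_1-1}'$. Since $\psi$ intertwines the action of $x_1$, it carries $\ker(x_1) \subset \mathcal{V}_6(\mu)$ isomorphically onto $\ker(x_1) \subset \mathcal{V}_6(\mu')$, both of which are one-dimensional by the previous step. Hence $\psi(v_0) = \xi v_0'$ for some $\xi \in \mathbb{K}^*$. Applying $x_2$ and using $\psi(v_0 x_2) = \psi(v_0) x_2$ gives $\mu \xi v_0' = \xi \mu' v_0'$, forcing $\mu = \mu'$. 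The converse is immediate: when $\mu = \mu'$ the identity map on basis vectors is a module isomorphism.

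No serious obstacle is anticipated; the only point to handle with a little care is verifying that $\ker(x_1)$ is exactly one-dimensional, which relies on $q_1$ being a \emph{primitive} $l_1$-th root of unity so that the coefficients $\tfrac{q_1^{-k}-1}{q_1-1}$ do not vanish in the relevant range. Once that is in place, the eigenvalue of $x_2$ on $\ker(x_1)$ is a module invariant equal to $\mu$, and the theorem follows at once.
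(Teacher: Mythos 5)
Your proof is correct and takes essentially the same approach the paper intends when it says the argument is parallel to Theorem~\ref{isov2}: pin down the image of $v_0$ using the $x_1$-action, then read off $\mu$ from the $x_2$-eigenvalue. Your bookkeeping via the one-dimensionality of $\ker(x_1)$ is a mild streamlining of the paper's template, which would first show $\psi(v_0)=\xi v_r'$ by eigenvalue separation and then use $x_1$ to force $r=0$, but the substance is the same.
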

\begin{proof}
        The proof is parallel to Theorem \ref{isov2}.
\end{proof}
\begin{rema}
It is evident from the above action that no nonzero vector in $\mathcal{V}_5(\underline{{\mu}})$ is annihilated by $x_1$, while $v_0$ in $\mathcal{V}_6({\mu})$ is annihilated by $x_1$. Therefore the simple $A(q_1,q_2,\lambda)$-modules $\mathcal{V}_5(\underline{{\mu}})$ and $\mathcal{V}_6({\mu})$ are nonisomorphic.
\end{rema}
\section{Classification of Type-II Simple Modules}\label{s8}
Let $M$ be a $z_1$-torsionfree and $z_2$-torsion simple module over $A(q_1,q_2,\lambda)$. This means that the operator $z_1$ is invertible on $M$, while the operator $z_2$ is zero on $M$. Then using the identity $z_2=z_1+(q_2-1)y_2x_2$, we can deduce that the operators $x_2$ and $y_2$ are invertible on $M$. Now we can verify using the defining relations and Lemma \ref{crq} that the elements $x_1^{d},y_1^{d},x_2,z_1$ in $A(q_1,q_2,\lambda)$ commute with each other, where $d=\lcmu(l_1,l_3)$. Since $M$ is finite-dimensional, there is a common eigenvector $v\in M$ under the action of the commuting operators $x_1^{d},y_1^{d},x_2$ and $z_1$. Take
\[vx_1^{d}=\alpha v,~vy_1^{d}=\beta v,~vx_2=\xi v,~vz_1=\eta v\] for some $\alpha,\beta,\xi,\eta\in \mathbb{K}$. The invertible action of $x_2$ and $z_1$ implies that $\xi$ and $\eta$ are nonzero scalars. In the following, we shall determine the structure of simple module $M$ based on the scalars $\alpha$ and $\beta$.\\
\noindent \textbf{Case 1:} Assume that $\alpha\neq 0$. This means, as $x_1^l$ is central and $d$ divides $l$, that $x_1$ acts as an invertible operator on $M$. Consequently the vectors $vx_1^r$ where $0\leq r\leq d-1$ are non-zero. Set $\underline{\mu}:=(\alpha^{\frac{1}{d}},\xi,\eta)$. Define a $\mathbb{K}$-linear map $\phi_5:\mathcal{V}_5(\underline{\mu})\rightarrow M$ by $\phi_5(v_r)=\mu_1^{-r}vx_1^r$. We can verify that $\phi_5$ is a nonzero $A(q_1,q_2,\lambda)$-module homomorphism. Thus by Schur's lemma, $\phi_5$ becomes a module isomorphism.\\
\noindent \textbf{Case 2:} Assume that $\beta\neq 0$. This gives that $y_1$ is an invertible operator on $M$. In this case, the simple $A(q_1,q_2,\lambda)$-module $M$ can be considered as a simple module over $A(p_1,p_2,\gamma)$ for $(p_1,p_2,\gamma)=(q_1^{-1},q_2,\lambda^{-1})$ with invertible action of $X_1$. The structure of such a simple module is classified in [Section \ref{s8}, Case 1].\\
\noindent \textbf{Case 3:} Assume that $\alpha=\beta=0$. This means that $x_1$ and $y_1$ are nilpotent operators on $M$. Then the $\mathbb{K}$-space $\ker(x_1):=\{v\in M~|~vx_1=0\}$ is nonzero subspace of $M$. Now we can check that the $\ker(x_1)$ is invariant under the action of each commuting operator $y_1^{d},x_2$ and $z_1$. Therefore these commuting operators have a common eigenvector $w$ in $\ker(x_1)$. Put 
\[wx_1=0,\ wy_1^{d}=\beta_1 w,\ wx_2=\xi_1 w,\ wz_1=\gamma_1 w.\]
for some scalars $\beta_1,\xi_1,\gamma_1\in \mathbb{{K}}$. From the equality $q_1z_1=1+(q_1-1)x_1y_1$, we can derive $\gamma_1=q_1^{-1}$. The nilpotent action of $y_1$ and invertible action of $x_2$ results in $\beta_1=0$ and $\xi_1\neq 0$, respectively. Suppose there exists an integer $1\leq r\leq d$ such that $wy_1^{r-1}\neq 0$ and $wy_1^{r}=0$. Simplifying the equality $wy_1^{r}x_1=0$, we obtain
\[0=wy_1^{r}x_1=wq_1^{-r}\left(x_1y_1^r-\frac{q_1^r-1}{q_1-1}\right)y_1^{r-1}=-q_1^{-r}\frac{q_1^{r}-1}{q_1-1}wy_1^{r-1}.\]
This gives $q_1^r=1$ for some $1\leq r\leq d$ and so $l_1$ divides $r$. If $r>l_1$, the $A(q_1,q_2,\lambda)$-submodule generated by $wy_1^{l_1}$ becomes a nonzero proper submodule of $M$, which is a contradiction. Thus $r=l_1$ and so the vectors $wy_1^k$, where $0\leq k\leq l_1-1$ are nonzero and $wy_1^{l_1}=0$. Set $\mu:=\xi_1\in\mathbb{K}^{*}$. Define a $\mathbb{K}$-linear map $\phi_6:\mathcal{V}_6(\mu)\rightarrow M$ by $\phi_6(v_k)=wy_1^k$. We can verify that $\phi_6$ is a nonzero module homomorphism. Thus by Schur's lemma, $\phi_6$ becomes a module isomorphism.
\section{Simple Modules of Type-III}\label{s9}
Let $M$ be a $z_1$-torsion simple $A(q_1,q_2,\lambda)$-module. This means that the action of $z_1$ on $M$ is zero. The relation $z_1=1+(q_1-1)y_1x_1$ ensures that the actions of $x_1$ and $y_1$ on $M$ are invertible. Thus $M$ becomes a simple module over the factor algebra $A(q_1,q_2,\lambda)/\langle z_1\rangle$ with invertible actions of $x_1$ and $y_1$. Moreover the factor algebra $A(q_1,q_2,\lambda)/\langle z_1\rangle$ is isomorphic to the $\mathcal{O}_{\Lambda}(\mathbb{K}^4)/\langle 1+(q_1-1)y_1x_1\rangle$, where $\Lambda$ is the $4\times 4$ multiplicatively antisymmetric matrix with the ordering of generators $x_1,y_1,x_2,y_2$ is given by 
\[\Lambda=\begin{pmatrix}
    1&1&q_1q_3&q_3^{-1}\\
    1&1&q_1^{-1}q_3^{-1}&q_3\\
    q_1^{-1}q_3^{-1}&q_1q_3&1&q_2\\
    q_3&q_3^{-1}&q_2^{-1}&1
\end{pmatrix}.\]
Therefore $M$ can be considered as a simple module over $\mathcal{O}_{\Lambda}(\mathbb{K}^4)/\langle 1+(q_1-1)y_1x_1\rangle$ with invertible actions of $x_1$ and $y_1$. In the root of unity context, similarly to [Step 2, Section \ref{pisection}], the skew-symmetric integral matrix associated with $\Lambda$ has rank $2$ and invariant factors $h_1,h_1$, where $h_1=\gcdi(s_1k_1,s_2k_2,s_3k_3)$. Consequently $\gcd(h_1,l)=1$ follows from Proposition \ref{impprop}. By Proposition \ref{mainpi}, we proceed to compute that $\pideg{\mathcal{O}_{\Lambda}(\mathbb{K}^4)}=\lcmu(l_1,l_2,l_3)$. It is important to note that the simple modules over $\mathcal{O}_{\Lambda}(\mathbb{K}^4)$ have been classified in \cite{smsb}. Based on this, we can classify simple modules $M$ over $\mathcal{O}_{\Lambda}(\mathbb{K}^4)/\langle 1+(q_1-1)y_1x_1\rangle$ with invertible actions of $x_1$ and $y_1$. The possible $\mathbb{K}$-dimensions of such a simple module $M$ are as follows: 
\begin{enumerate}
    \item If the actions of $x_2$ and $y_2$ are invertible on $M$, then $\dime_{\mathbb{K}}(M)=\lcmu(l_1,l_2,l_3)$.
    \item If the action of $x_2$ is invertible and $y_2$ is nilpotent on $M$, then $\dime_{\mathbb{K}}(M)=\ord(q_1\lambda)$.
    \item If the action of is $y_2$ invertible and $x_2$ is nilpotent on $M$, then $\dime_{\mathbb{K}}(M)=\ord(\lambda)$.
    \item If the actions of $x_2$ and $y_2$ are both nilpotent on $M$, then $\dime_{\mathbb{K}}(M)=1$.
\end{enumerate}
This concludes the comprehensive classification of simple $A(q_1,q_2,\lambda)$-modules up to isomorphism at roots of unity and, in particular, provides a complete solution to \cite[Problem 2]{cw} for the second quantum Weyl algebra.

\end{document}